\newtheorem {theorem}    {Theorem}[section]
\newtheorem {proposition}[theorem]    {Proposition}
\theoremstyle{definition}
\newtheorem{definition}[theorem]{Definition}
\newtheorem{ex}[theorem]{Example}
\def\x{\mathbf{x}}
\def\y{\mathbf{y}}
\newenvironment{red}{\relax\color{red}}{\hspace*{.5ex}\relax}
\newenvironment{blue}{\relax\color{blue}}{\hspace*{.5ex}\relax}
\newcommand{\ber}{\begin{red}}
\newcommand{\er}{\end{red}}
\newcommand{\beb}{\begin{blue}}
\newcommand{\eb}{\end{blue}}
\newcommand{\seteq}{\mathbin{:=}}
\numberwithin{equation}{section}
\numberwithin{figure}{section}
\numberwithin{table}{section}
\begin{document}

\title{Machine Learning Mutation-Acyclicity of Quivers}

\date{\today}

\author[K.~T.~K. Armstrong-Williams]{Kymani T.~K. Armstrong-Williams}
\address{Centre for Theoretical Physics, Queen Mary University of London, E1 4NS, U.K.}
\email{k.t.k.armstrong-williams@qmul.ac.uk}

\author[E. Hirst]{Edward Hirst}
\address{Centre for Theoretical Physics, Queen Mary University of London, E1 4NS, U.K.}
\email{e.hirst@qmul.ac.uk}

\author[B. Jackson]{Blake Jackson}
\address{Department of
Mathematics, University of Connecticut, Storrs, CT 06269, U.S.A.}
\email{blake.jackson@uconn.edu}

\author[K.-H. Lee]{Kyu-Hwan Lee}

\address{Department of
Mathematics, University of Connecticut, Storrs, CT 06269, U.S.A. \hfill \break \indent Korea Institute for Advanced Study, Seoul 02455, Republic of Korea}
\email{khlee@math.uconn.edu}


\begin{abstract}
Machine learning (ML) has emerged as a powerful tool in mathematical research in recent years. 
This paper applies ML techniques to the study of quivers---a type of directed multigraph with significant relevance in algebra, combinatorics, computer science, and mathematical physics. 
Specifically, we focus on the challenging problem of determining the mutation-acyclicity of a quiver on 4 vertices, a property that is pivotal since mutation-acyclicity is often a necessary condition for theorems involving path algebras and cluster algebras. 
Although this classification is known for quivers with at most 3 vertices, little is known about quivers on more than 3 vertices.
We give a computer-assisted proof of a theorem to prove that mutation-acyclicity is decidable for quivers on 4 vertices with edge weight at most 2.
By leveraging neural networks (NNs) and support vector machines (SVMs), we then accurately classify more general 4-vertex quivers as mutation-acyclic or non-mutation-acyclic.
Our results demonstrate that ML models can efficiently detect mutation-acyclicity, providing a promising computational approach to this combinatorial problem, from which the trained SVM equation provides a starting point to guide future theoretical development. \\
Report Number: QMUL-PH-24-27 
\end{abstract}

\keywords{cluster algebras, quiver mutation, mutation-acyclicity, machine learning, computer-assisted proof}

\maketitle


\vspace*{-0.5 cm}

\section{Introduction}

In the current era of artificial intelligence (AI) transforming nearly every aspect of society and science, mathematics research is no exception.
The efficiency and capability of machine learning (ML) can undoubtedly be utilized in mathematics research. 
Indeed, intensive research has been conducted on ``ML mathematics'' to explore whether ML can detect underlying mathematical structures.
Through applying ML tools to various datasets generated from mathematical objects, high accuracies (often exceeding 98\%) have been achieved in classifying mathematical objects according to their invariants and properties. 
Examples include algebraic curves \cite{ Hashemi:2024azx, He_2022, He:2022pqn, He_2023}, special holonomy manifolds \cite{Aggarwal:2023swe, Alawadhi:2023gxa, Ashmore:2023ajy, Berglund:2023ztk, Berman:2021mcw, Hirst:2023kdl}, branes \cite{Arias-Tamargo:2022qgb, Bao:2021olg, Capuozzo:2024vdw, Chen:2022jwd, Seong:2023njx}, and number fields \cite{Amir:2022sab,He_2022_n}.
Interpreting what the machine learns can sometimes lead to new discoveries or perspectives in the study of mathematical objects. 
The recent discovery of {\em murmuration} phenomena in arithmetic is a prime example \cite{quanta, He:2022pqn}. 
This proposes a new paradigm in mathematics research: generate datasets, apply ML tools, interpret the results, gain new perspectives, formulate conjectures, and prove them.

In this paper, we apply this framework to the problem of quiver mutations. 
A {\em quiver} is a directed multigraph without loops or directed 2-cycles, and quivers are ubiquitous in mathematics, mathematical physics, and computer science, as they efficiently encode fundamental skew-symmetric structures. 
The {\em mutation} of a quiver is a combinatorial operation that originates from cluster algebras \cite{Fomin2002clusterI}, where it is an essential component, and from quantum field theory, where it relates to Seiberg duality \cite{Feng_2001, Seiberg_1995}. 
The iterative application of these mutations generates a rich and intricate algebraic combinatorial structure.

When a quiver is obtained from another via successive mutations, we say the two quivers are {\em mutation-equivalent}. 
In the theory of cluster algebras and related topics, it is crucial to know whether a quiver is mutation-equivalent to a quiver with no oriented cycles, called an {\em acyclic} quiver. 
When a quiver is mutation-equivalent to an acyclic quiver, it is ``in control,'' and we understand relatively well the constructions based on such a quiver from various tools coming from representation theory and combinatorics.
Otherwise, it is ``out of control,'' and our understanding of the related structures is limited. 
Thus, knowing whether a quiver is mutation-equivalent to an acyclic quiver is often a question with a highly consequential answer.

However, even for the problem of determining {\em mutation-acyclicity}---let alone general mutation equivalence---we currently lack practical methods or algorithms when the number of vertices is greater than 3. 
With the advent of new approaches from the ML framework, we investigate in this paper whether ML can assist in addressing this problem. 
After generating datasets from mutations of quivers, we apply ML tools to this problem from both supervised and unsupervised learning. 
More precisely, we apply neural networks (NNs), support vector machines (SVMs), and principal component analysis (PCA). 
This extends work from \cite{Bao:2020nbi, Cheung:2022itk, Dechant:2022ccf} where NNs were first applied to differentiate mutation classes and their representations, now in this work with a focus on the mutation-acyclicity property.

In the construction of quiver data for analysis, we prove a new Theorem, Theorem \ref{ca_theorem}, via computer-assisted means; classifying all rank 4 quivers with edge weight at most 2 according to the mutation-acyclicity property.
The results of our following ML experiments clearly show that ML architectures can be used to detect mutation-acyclicity efficiently. 
We achieve high accuracy in distinguishing mutation-acyclic quivers from non-mutation-acyclic ones using both NNs and SVMs. 
In fact, it was these first promising ML results that motivated the authors to perform the work required to prove the new Theorem.
The SVMs provide directly interpretable equations (viewed as separating hypersurfaces) much like the Markov constant for rank 3 quivers \cite{BBH2011Cluster}. 
These trained equations should provide theoretical insight into the underlying structure of high-rank mutation-acyclicity invariants, where we leave full analysis and design of the invariant to future work. 

For the ML experiments of this paper, quiver data was generated using the \texttt{SageMath} \cite{sage} package \cite{musiker2011compendium}, and graph analysis was conducted using \texttt{networkx} \cite{Hagberg2008ExploringNS} and \texttt{graph-tool} \cite{peixoto_graph-tool_2014}. 
The subsequent machine learning tasks were performed in \texttt{python}, utilizing the \texttt{scikit-learn} \cite{sklearn} and \texttt{tensorflow} \cite{tensorflow2015whitepaper} packages. 
Scripts were adapted from previous works \cite{Cheung:2022itk, Dechant:2022ccf}, with high-performance computing resources \cite{apocrita}. 
All scripts and data are made available on this work’s respective \href{https://github.com/KTKAW/MACHINE_LEARNING_MUTATION_ACYCLICITY_OF_QUIVERS.git}{\texttt{GitHub}}\footnote{\href{https://github.com/KTKAW/MACHINE_LEARNING_MUTATION_ACYCLICITY_OF_QUIVERS.git}{\texttt{https://github.com/KTKAW/MACHINE_LEARNING_MUTATION_ACYCLICITY_OF_QUIVERS.git}}}.

After this introduction, §\ref{sec:bg} covers background material. 
In §\ref{subsec:ca}, quiver mutation and cluster algebras are defined, and some results on mutation-acyclicity are presented, including our new result (Theorem \ref{ca_theorem}). 
In §\ref{subsec:ml}, the ML methods and tools that are adopted in this paper are reviewed, including NNs, SVMs, and PCA. 
§\ref{sec:data} explains how we generate our datasets from quiver mutations. 
The next section, §\ref{sec:ml}, contains the results of our ML investigations. 
§\ref{sec:nn} summarises what we obtain from NNs. 
§\ref{sec:sv} highlights separating hyperplanes coming from SVMs. 
In the final section, §\ref{sec:conc}, we make concluding remarks. 

\section{Background} \label{sec:bg}
Cluster algebras provide a well-motivated arena for testing the efficacy of machine learning (ML) methods, chiefly due to the combinatorial nature of the quivers and their matrix representations. 
A prominent problem within cluster algebras is identifying whether a given quiver is equivalent to an acyclic\footnote{In graph theoretic terms, we are interested in \textit{strongly} acyclic weighted directed graphs.} quiver or not under the mutation operation. 
Whilst this is infeasible by eye, this paper examines how effective ML architectures are at learning this property.
In this background section, the relevant mathematical details underpinning cluster algebras and this mutation-acyclicity problem are introduced, along with the implemented ML methods.
 
\subsection{Cluster Algebras} \label{subsec:ca} \mbox{}\\
Cluster algebras are subrings of the field of rational functions in $n$ commuting variables over $\mathbb{Q}$.
They often appear as coordinate rings of certain geometric objects, and were formalized by Fomin and Zelevinsky \cite{Fomin2002clusterI} in order to study their (dual) canonical bases and total positivity.
Since their introduction, cluster algebras have found applications in various areas of mathematics and mathematical physics, including representation theory, combinatorics, algebraic geometry, dynamical systems, knot theory, and string theory. 

What makes cluster algebras distinctive is that they are defined recursively via a local process, called \textit{mutation}, with some initial algebraic and combinatorial data, called an \textit{initial seed}.
By repeatedly mutating the initial seed (sometimes indefinitely), we can generate the full list of generators required to define a cluster algebra.

\begin{definition} \label{def-quiver}
    A \textit{quiver} is a finite, directed multigraph $Q = (Q_0, Q_1)$. We always assume that a quiver does not have loops or oriented 2-cycles. 
    The elements of $Q_0 = \{1,\ldots, n\}$ are the \textit{vertices} of $Q$ and the elements of $Q_1$ are the \textit{arrows} of $Q$.
    In this case, we call $n$ the \textit{rank} of $Q$.
    There exist two maps, the \textit{source map} and the \textit{target map} $s, t: Q_1 \to Q_0$, where $s(\alpha)$ returns the vertex at the source of $\alpha$ and $t(\alpha)$ returns the vertex at the target of the arrow $\alpha$.
    We say a quiver is \textit{acyclic} if it contains no directed cycles of any length.
\end{definition}

\begin{definition} \label{def-quiver-mutation}
    Given a quiver $Q$ and a vertex $k$, we can define a new quiver $\mu_k(Q)$, called the \textit{mutation of $Q$ at vertex $k$}, in the following way:
    \begin{enumerate}
        \item for each directed 2-path $i \to k \to j$ in $Q$, add an arrow $i \to j$ in $\mu_k(Q)$,
        \item reverse the direction of all arrows incident to $k$,
        \item pairwise delete any oriented 2-cycles in $\mu_k(Q)$ which have appeared as a result of step 1.
    \end{enumerate}
    We say a quiver $Q'$ is \textit{mutation-equivalent} to $Q$ if there is a sequence of vertices $[k_1,k_2\ldots,k_\ell]$ such that $Q' = \mu_{k_\ell}( \cdots \mu_{k_2}(\mu_{k_1}(Q)) \cdots )$: in other words, we can pass from $Q$ to $Q'$ by successive mutations at the vertices $k_1, k_2, ..., k_\ell$.
    The \textit{mutation class} $[Q]$ of $Q$ is the collection of all quivers $Q'$ that are mutation-equivalent to $Q$.
\end{definition} 

\begin{ex}
The following is an example of mutation in a rank 4 quiver: \[Q = \begin{tikzcd}
    & 1 \\
    & 2 \\
    3 & & 4 
    \arrow[from=1-2, to=2-2, "2"]
    \arrow[from=2-2, to=3-1]
    \arrow[from=2-2, to=3-3]
    \arrow[from=3-1, to=1-2]
    \arrow[from=3-1, to=3-3]
    \end{tikzcd}
    \quad \longrightarrow \qquad
    \mu_2(Q) = \begin{tikzcd}
    & 1 \\
    & 2 \\
    3 & & 4 
    \arrow[from=2-2, to=1-2, "2"]
    \arrow[from=3-1, to=2-2]
    \arrow[from=3-3, to=2-2]
    \arrow[from=1-2, to=3-1]
    \arrow[from=3-1, to=3-3]
    \arrow[from=1-2, to=3-3, "2"]
    \end{tikzcd}\]
Arrows with the label $2$ indicate two arrows between the vertices.
As seen from the example and definition, mutating a quiver $Q$ at any vertex $k$ produces another quiver. 
Moreover, mutation is an involution, meaning that $\mu_k(\mu_k(Q)) = Q$ for any vertex $k$.
It should also be noted that quivers are in bijective correspondence with skew-symmetric integral matrices. 
If $Q$ is a rank $n$ quiver with $Q_0 = \{ 1, \ldots, n\}$, then $Q$ corresponds to a $n \times n$ skew-symmetric matrix $B = B(Q) = (b_{ij})$ where \[ b_{ij} = (\# \text{ arrows from } i \text{ to } j) - (\# \text{ arrows from } j \text{ to } i). \]
This is the signed adjacency matrix of the quiver when viewed as a directed graph.
There is also a notion of mutation of skew-symmetric matrices, and this definition agrees with the definition of mutation of quivers.
Since the correspondence between quivers and skew-symmetric integral matrices is bijective and mutation works the same in both cases, we prefer to use the quiver perspective when visualizing quivers and the skew-symmetric matrix perspective for ML. 

For $Q$ at the beginning of this example, we have \[ B(Q) = \begin{pmatrix}
    0 & 2 & -1 & 0 \\
    -2 & 0 & 1 & 1 \\
    1 & -1 & 0 & 1 \\
    0 & -1 & -1 & 0
\end{pmatrix} \qquad \text {and} \qquad \mu_2(B(Q)) = B(\mu_2(Q)) = \begin{pmatrix}
    0 & -2 & 1 & 2 \\
    2 & 0 & -1 & -1 \\
    -1 & 1 & 0 & 1 \\
    -2 & 1 & -1 & 0
\end{pmatrix}\]
\end{ex}

\begin{definition} \label{def-cluster-algebra}
    Let $\x = \{x_1,\ldots,x_n\}$ be a set of $n$ variables and $Q$ be a quiver on $n$ vertices, where the vertex labeled $k$ is understood to correspond with the variable $x_k$. 
    Let $\mathcal{F} = \mathbb{Q}(x_1,\ldots,x_n)$ be the field of rational functions in $\x$.
    \textit{Mutation of the seed $(\x,Q)$ at vertex $k$} is the pair $\mu_k(\x,Q) = (\x', Q')$ where $Q'=\mu_k(Q)$ and $\x'  \seteq  \mu_k(\x)=(\x \setminus \{ x_k \} ) \cup \{x_k'\}$ and $x_k' \in \mathcal{F}$ is defined by the \textit{exchange relation}: 
    \[ x_kx_k' = \prod_{ \alpha  :  s(\alpha) = k  }x_{t(\alpha)} + \prod_{ \alpha  :  t(\alpha) = k  }x_{s(\alpha)}. \]
    We call $\x'' \in \mathcal F$ a \textit{cluster} if $\x'' =  \mu_{k_\ell} \cdots \mu_{k_2}\mu_{k_1}(\x)$ for some sequence $[k_1,k_2\ldots,k_\ell]$ and elements $x_k'' \in \x''$ \textit{cluster variables}.   The \textit{cluster algebra (of rank $n$)} $\mathcal{A} = \mathcal{A}(\x,Q)$ is the $\mathbb{Z}$-subalgebra of $\mathcal{F}$ generated by all the cluster variables.
\end{definition}

The role that the initial combinatorial data $(\x, Q)$ plays in the structure of $\mathcal{A}(\x,Q)$ cannot be overstated.
The addition or removal of a single arrow of $Q$ can determine whether the resulting algebra consists of finitely many or infinitely many cluster variables, and many theorems in the cluster algebra literature depend on the mutation class of $Q$.
Specifically, there is a collection of results that require the initial quiver $Q$ to be acyclic, so that the cluster algebra is well-behaved.

\subsubsection{Quiver Mutation-Acyclicity}\mbox{}\\
In this section, we define mutation-acyclicity and motivate it with examples from the literature. 
Going through each of these examples in detail is beyond the scope of this paper, and we direct readers to the references in this section should they want to investigate further.

\begin{definition} \label{def-mutation-acyclic}
    Let $Q$ be a quiver. We say that $Q$ is \textit{mutation-acyclic} if there exists at least one acyclic quiver in $[Q]$, or equivalently, if $Q$ is mutation-equivalent to an acyclic quiver $Q'$.
    If there are no acyclic quivers in $[Q]$, then we say $Q$ is \textit{non-mutation-acyclic}.
\end{definition}

Researchers care about acyclic and mutation-acyclic quivers for a variety of reasons. 
For one, path algebras over acyclic quivers have finite vector space dimension \cite{Assem2006Elements,Schiffler2014Quiver}.
These are important objects of study since every hereditary Artin algebra is Morita equivalent to a path algebra over some quiver, and finite matrices can represent the maps between finite-dimensional algebras.
Buan, Marsh, and Reiten show that, for a cluster algebra arising from an acyclic quiver, cluster mutations correspond to tilting objects in the cluster category over the path algebra of $Q$ \cite{BMR2008Cluster}.
In terms of the cluster algebras this means that, if $Q$ is acyclic, the denominator vectors of the non-initial cluster variables of $\mathcal{A}(\x,Q)$ correspond to the dimension vectors of the indecomposable rigid representations of $Q$ over the path algebra $kQ$, where $k$ is an algebraically closed field; these dimension vectors are the so-called \textit{real Schur roots}.
Furthermore, these denominator vectors (real Schur roots) correspond to the positive roots of the root system of the Kac--Moody algebra associated with $Q$ \cite{Bongartz1983Algebras,Kac1980Infinite}.
It was also shown that the real Schur roots coincide with the $c$-vectors of the cluster algebra $\mathcal{A}(\x,Q)$ when $Q$ is acyclic \cite{Chavez2015Acyclic}.
There has been research attempting to understand the $c$-vectors of cluster algebras generated from acyclic quivers for this reason, including some work by the last two authors \cite{EJLN2024Geometry,Felikson2018Acyclic,LL2021Correspondence,LLM2023Geometric}.

Another concept is that of \textit{reddening sequences} in cluster algebras with coefficients, and it is known that mutation-acyclic quivers admit a reddening sequence.
In his paper introducing reddening sequences (maximal green sequences), Keller \cite{Keller2017Quiver} shows that reddening sequences produce explicit formulas for the refined Donaldson--Thomas invariants introduced by Kontsevich and Soibelman \cite{Kontsevich2008Stability}.
The existence of reddening sequences is also a sufficient condition for the existence of a theta basis for the upper cluster algebra (a sought-after canonical basis) \cite{GHKK2018Canonical}.

Other research focuses on the class of \textit{locally acyclic cluster algebras} \cite{Muller2013Locally} where the cluster algebra can be ``covered'' by cluster algebras generated by acyclic quivers.
These have many desirable properties, chief among them that the cluster algebra is equal to the upper cluster algebra ($\mathcal{A} = \mathcal{U}$), which is the intersection of Laurent rings of the cluster variables and has a geometric interpretation \cite{Muller2014AU}.

Given the desirable properties that cluster algebras generated from acyclic quivers possess, one might want to know when a given quiver is mutation-equivalent to an acyclic quiver; in other words, when a quiver is mutation-acyclic.
This is because $\mathcal{A}(\x,Q') \cong \mathcal{A}(\x,Q)$ when $Q$ and $Q'$ are mutation-equivalent.
However, deciding whether or not a given quiver is mutation-acyclic is a difficult problem; in fact, there is no known algorithm or invariant for detecting mutation-acyclicity in quivers with more than 3 vertices (except in special cases).
We now survey the known results that can be used to determine mutation-acyclicity.

\subsubsection{Quiver Mutation-Acyclicity in Rank 1, 2, and 3}\mbox{}\\
The problem is trivial for rank 1 and rank 2 quivers since they are all, by definition, acyclic, and so every quiver of rank 1 or 2 is mutation-acyclic.
The first non-trivial case occurs when $Q$ is a cyclic quiver of rank 3.

In 2008, Assem, Blais, Br\"ustle, and Samson \cite{assem2008mutation} described an algorithm for detecting mutation-acyclicity\footnote{This algorithm, along with the fact that the acyclic quivers in a mutation-class form a connected component of the exchange graph, can actually be used to detect mutation-equivalence in rank 3 quivers. This algorithm results in a minimal representative of a quiver's mutation class.} in rank 3 quivers.
Let \[Q = \begin{tikzcd}[row sep={4em,between origins}, column sep={4em,between origins}]
    & 2 &   \\
    1 & & 3 
    \arrow[from=2-1, to=1-2, "x"]
    \arrow[from=1-2, to=2-3, "y"]
    \arrow[from=2-3, to=2-1, "z"]
    \end{tikzcd}\]
be a cyclic quiver on 3 vertices. 
The algorithm described is simple: mutate at the vertex opposite the largest collection of edges.
So, if $z$ were the largest collection of edges in $Q$, mutate at vertex $2$.
If there is more than one largest collection of edges, the one you pick to mutate across from doesn't matter.
This process continues until either an acyclic quiver is reached or mutation at any vertex does not decrease the total number of edges ($x+y+z$).

In 2011, Beineke, Br\"ustle, and Hille \cite{BBH2011Cluster} were able to circumvent the algorithm described above and produced a mutation-invariant constant (called the Markov constant) which is used to classify rank 3 quivers as mutation-acyclic or not.
Let \[C(x,y,z) = x^2 + y^2 + z^2 - xyz\] be the \textit{Markov constant} associated to a rank 3 cyclic quiver $Q$ (as above). 
A partial statement\footnote{The full pair of theorems has a few more conditions, but for our purposes, the statement below is sufficient.} of their two main theorems is below. 
\begin{theorem}\cite[Theorem 1 and 2]{BBH2011Cluster}\label{theorem:subquiver}
    Let $Q$ be a cyclic quiver on 3 vertices with numbers of arrows given by $x,y,z \in \mathbb{Z}_{\geq 0}$. 
    Then $Q$ is mutation-acyclic if any of the following are satisfied:
    \begin{enumerate}
        \item $C(x,y,z) > 4$ or $\min\{x,y,z\} < 2.$
    \end{enumerate}
The quiver $Q$ is non-mutation-acyclic if any of the following are satisfied:
    \begin{enumerate}
        \item $C(x,y,z) < 0$
        \item $C(x,y,z) \leq 4$ and $x,y,z \geq 2$.
    \end{enumerate}
\end{theorem}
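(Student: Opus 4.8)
The plan is to reduce the whole statement to the dynamics of a single ``Vieta jump'' on the triple of arrow counts. The first step is the rank-$3$ mutation lemma: for a cyclic quiver $Q$ carrying $x$ arrows on one edge and $y,z$ on the other two, mutating at the vertex opposite the $x$-edge reverses all arrows at that vertex and replaces the $x$-count by $|yz-x|$; the outcome is acyclic exactly when $x\ge yz$, and otherwise is again a cyclic quiver with counts $(yz-x,\,y,\,z)$, and symmetrically for $y\mapsto xz-y$ and $z\mapsto xy-z$. The engine is the identity $(yz-x)^2+y^2+z^2-(yz-x)yz=x^2+y^2+z^2-xyz$, which shows $C(x,y,z)$ is invariant along any mutation path that stays cyclic; since reaching an acyclic quiver is then the only route to mutation-acyclicity, calling a cyclic triple \emph{stable} when $x<yz$, $y<xz$, $z<xy$ all hold gives the reformulation: $Q$ is non-mutation-acyclic iff every triple in $[Q]$ is stable, and mutation-acyclic iff some quiver reachable from $Q$ carries an unstable triple. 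In particular, if $\min\{x,y,z\}\le1$ then the number of arrows on a maximal edge is at least the product of the other two weights, so mutating at the opposite vertex is at once acyclic; this settles the $\min<2$ clause and the explicit list in part~(2), every entry of which has $\min\le1$.

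For the non-mutation-acyclic direction I would show that $R=\{(x,y,z):x,y,z\ge2,\ C(x,y,z)\le4\}$ is closed under Vieta jumps and contains no unstable triple. If $x,y,z\ge2$ and, say, $z\ge xy$ then $C\ge x^2+y^2\ge8$, so a triple in $R$ must be stable; and if $xy-z\in\{0,1\}$ a one-line expansion gives $C\ge x^2+y^2\ge8$ or $C=(x-y)^2+xy+1\ge5$, so for a triple in $R$ we have $xy-z\ge2$, meaning the jump $(x,y,z)\mapsto(xy-z,x,y)$ and its symmetric analogues keep all counts $\ge2$ and remain in $R$. By induction the entire mutation class of a quiver in $R$ consists of cyclic quivers, so the quiver is non-mutation-acyclic. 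The hypothesis $C<0$ is subsumed: $C<0$ forces $\min\{x,y,z\}\ge2$ (a count $\le1$ would give $C\ge x^2+y^2-xy\ge0$), and then $C<0\le4$ puts the triple in $R$.

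The remaining case $C(x,y,z)>4$ with $\min\ge2$ is the crux, which I would settle by strong induction on $x+y+z$. If the triple is unstable, one mutation is acyclic. If it is stable, order $x\ge y\ge z\ge2$ and jump at the largest coordinate, $x\mapsto yz-x$; the key claim is that this strictly decreases the sum, i.e.\ $yz<2x$. Suppose not, so $y\le x\le yz/2$. Writing $C=g(x)$ with $g(t)=t^2-yz\,t+(y^2+z^2)$, a convex parabola minimized at $t=yz/2\ge y$, the point $x$ lies on the decreasing branch $[y,\,yz/2]$, so $C\le g(y)=z^2-y^2(z-2)$, which equals $4$ when $z=2$ and is $\le0$ when $z\ge3$ (since then $y^2(z-2)\ge z^2$); in all cases $C\le4$, contradicting $C>4$. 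Hence the jump strictly lowers $x+y+z$, and the resulting triple either has a count $\le1$ (mutation-acyclic by the easy case) or has all counts $\ge2$, the same $C>4$, and a smaller sum, so the induction hypothesis applies. Altogether this produces the sharp dichotomy --- $Q$ is mutation-acyclic if and only if $\min\{x,y,z\}<2$ or $C(x,y,z)>4$ --- which contains the stated theorem. The single genuine obstacle is the inequality just used: one must recognize that the right move is to jump at the \emph{largest} coordinate, and then see that the threshold for this jump to be strictly decreasing is exactly $C=4$ --- the value of $g(y)$ at the boundary configuration $z=2$, namely at the Markov quiver $(2,2,2)$.
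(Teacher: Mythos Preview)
The paper does not prove this theorem; it is quoted from Beineke--Br\"ustle--Hille \cite{BBH2011Cluster} as background, so there is no in-paper argument to compare against. That said, your argument is correct and is essentially the Markov--descent approach of the original source: reduce to the Vieta involution $(x,y,z)\mapsto(yz-x,y,z)$, observe that $C$ is preserved, show the region $\{x,y,z\ge2,\ C\le4\}$ is forward-invariant and contains no unstable triple, and for $C>4$ run the descent on the largest coordinate. Your computation $C\le g(y)=z^2-y^2(z-2)\le4$ under the hypothesis $y\le x\le yz/2$ is the same pivot inequality that drives their proof, and your observation that the boundary $C=4$ is attained exactly at the Markov triple $(2,2,2)$ is the key structural point. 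You in fact obtain the sharp dichotomy (mutation-acyclic $\Longleftrightarrow$ $\min<2$ or $C>4$), which is slightly stronger than the partial statement the present paper quotes; the enumerated list in part~(2) is then redundant, since every entry has $\min\le1$.
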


A consequence of this theorem is that the only non-mutation-acyclic quiver of rank 3 with at most 2 arrows between any pair of vertices is the Markov quiver:
\[M = \begin{tikzcd}[row sep={4em,between origins}, column sep={4em,between origins}]
    & 2 &   \\
    1 & & 3 
    \arrow[from=2-1, to=1-2, "2"]
    \arrow[from=1-2, to=2-3, "2"]
    \arrow[from=2-3, to=2-1, "2"]
    \end{tikzcd}\]
The ability to check whether a rank 3 quiver is non-mutation-acyclic using the Markov constant plays a large part in the proof of Theorem~\ref{ca_theorem} and in generating data for our ML experiments.

As a foreshadowing of the ML techniques used later in this paper, Theorem~\ref{theorem:subquiver} has a geometric interpretation.
There are two ``critical values'' for the Markov constant: 0 and 4.
If $C(x,y,z) > 4$ (or there is a single/missing arrow), then the quiver is mutation-acyclic; and if $C(x,y,z) < 0$, then the quiver is non-mutation-acyclic.
By plotting the triple $(x,y,z)$ of edge weights in the first octant, we can see that $C(x,y,z) = 0, 4$ defines two separating (or classifying) nonlinear hypersurfaces in $\mathbb{R}^3$ (Figure~\ref{fig-hypersurfaces}).
The singularity in Figure~\ref{fig-hypersurfaces4} occurs at the point $(2,2,2)$, which is the Markov quiver.
This geometric classification was one of the primary motivations for the Support Vector Machine analysis in §\ref{sec:sv}.

\begin{figure}[!h]
    \centering
    \begin{subfigure}[t]{0.33\textwidth}
        \centering
        \includegraphics[width = \textwidth]{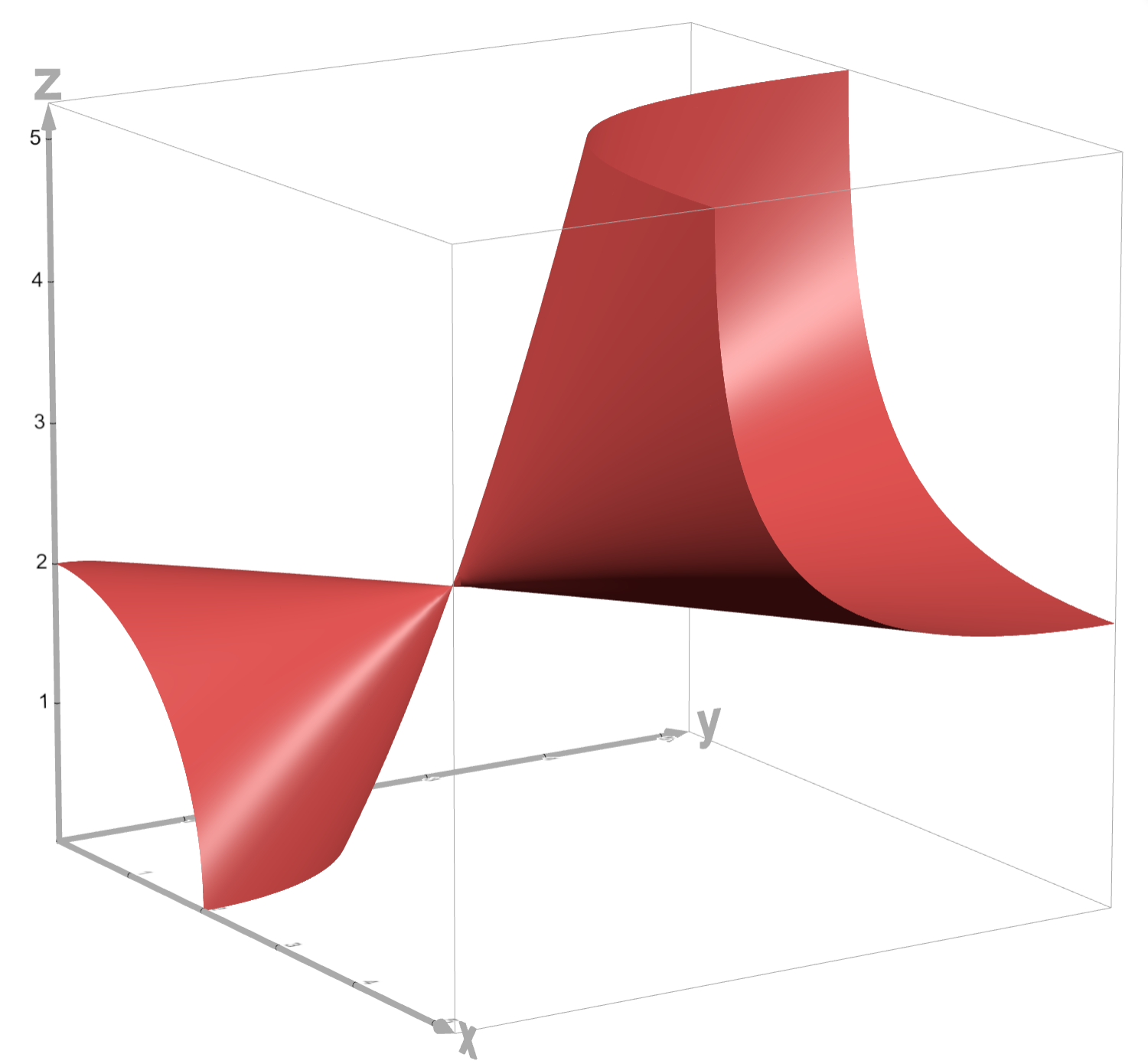}
        \caption{The hypersurface $C(x,y,z) = 4$}\label{fig-hypersurfaces4}
    \end{subfigure}%
    ~ 
    \begin{subfigure}[t]{0.33\textwidth}
        \centering
        \includegraphics[width = \textwidth]{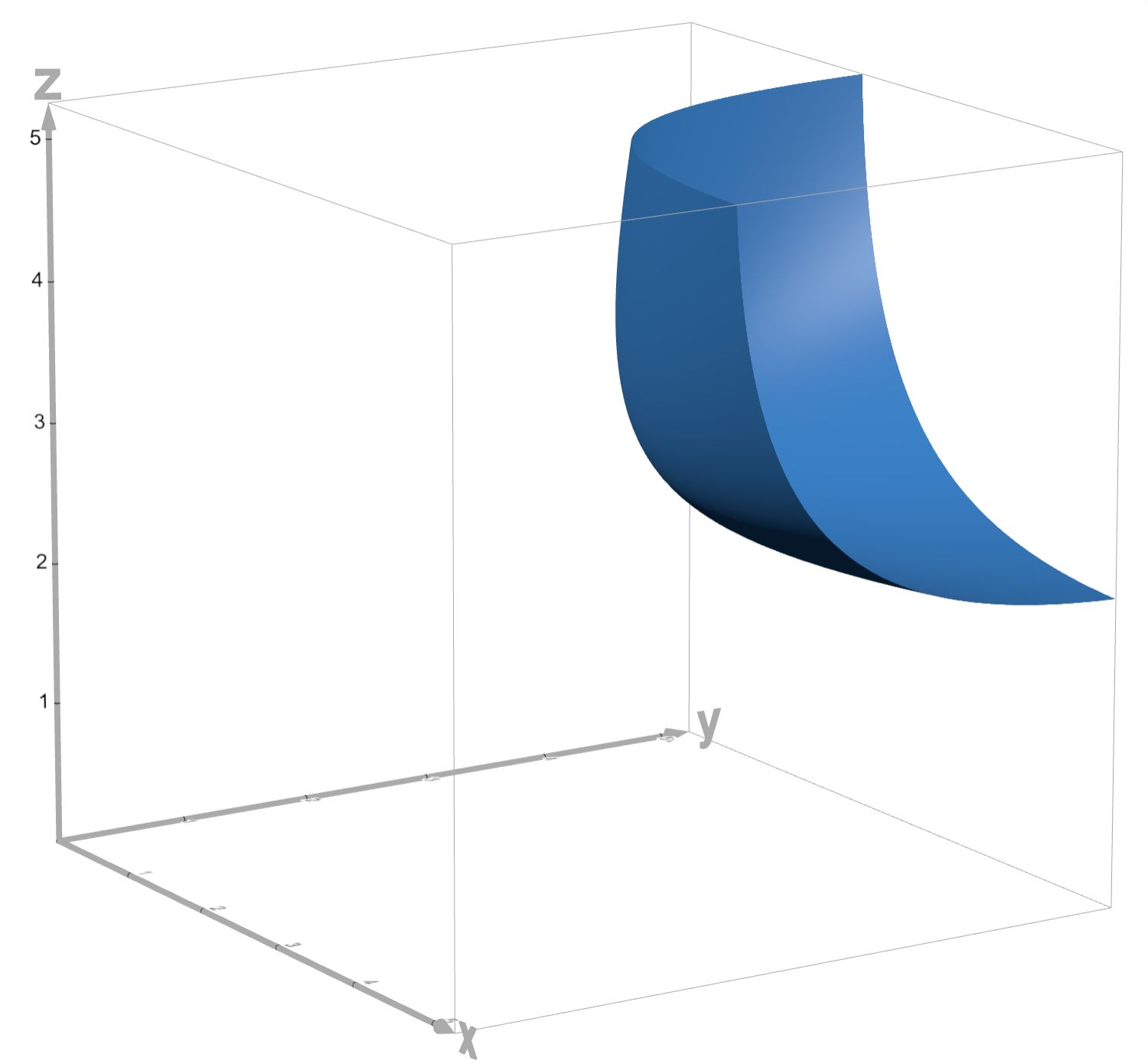}
        \caption{The hypersurface $C(x,y,z) = 0$}
    \end{subfigure}
    ~ 
    \begin{subfigure}[t]{0.33\textwidth}
        \centering
        \includegraphics[width = \textwidth]{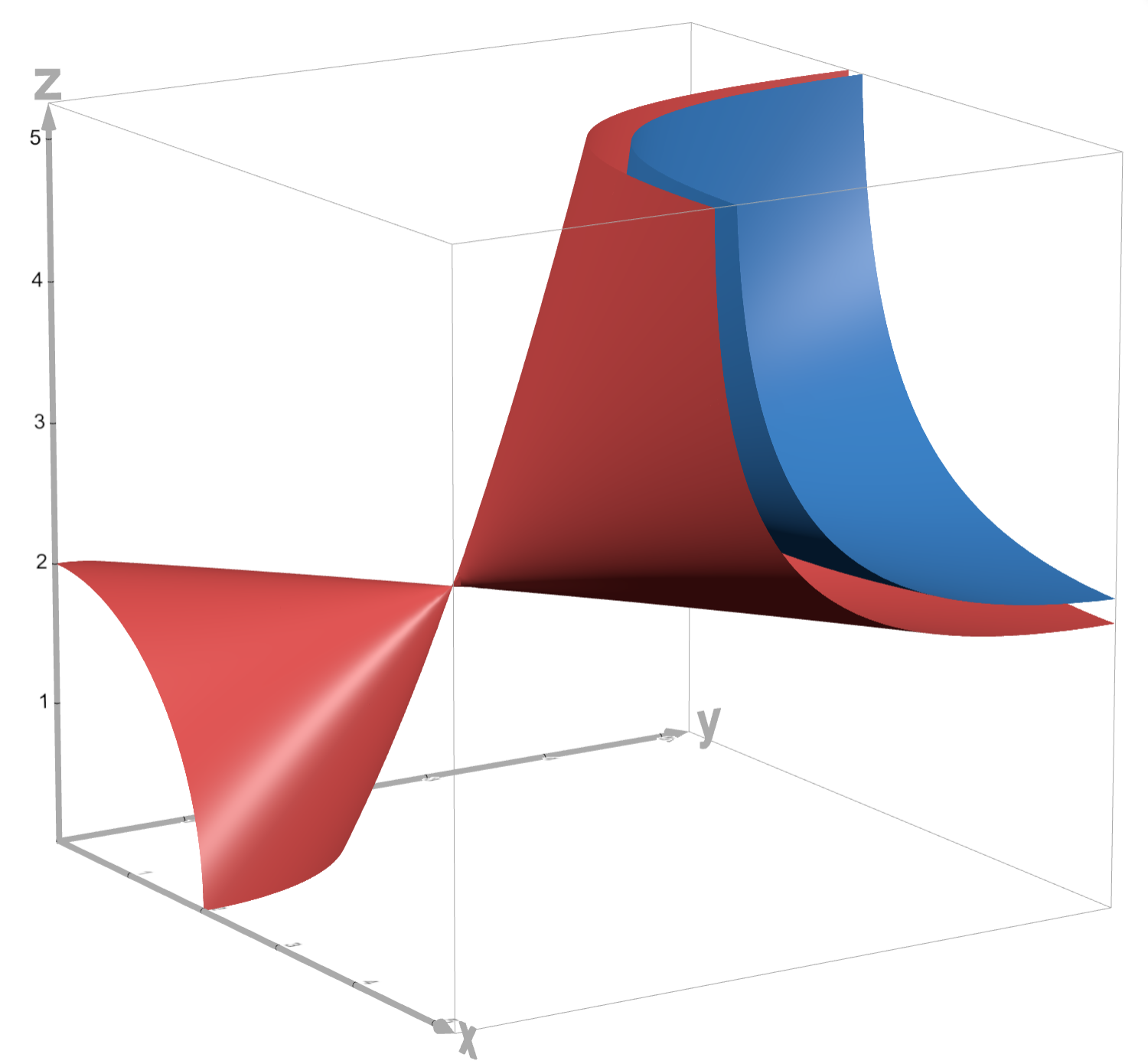}
        \caption{The hypersurfaces together}
    \end{subfigure}
    \caption{The Markov constant classification visualized as separating hypersurfaces}\label{fig-hypersurfaces}
\end{figure} 

\subsubsection{Known Results on Quiver Mutation-Acyclicity in Rank 4}\mbox{}\\
In rank 4, few existing results can help researchers determine if an arbitrary quiver $Q$ is mutation-acyclic.
There is currently no algorithm or mutation-invariant that can be used to decide mutation-acyclicity in ranks 4 and above, and there is reason to believe that no such algorithm exists for arbitrary quivers \cite{fomin_long_2023}.
However, as we will see later, we find evidence of an underlying (possibly mutation-invariant) combinatorial structure that the machine can learn to distinguish mutation-acyclic and non-mutation-acyclic rank 4 quivers.

In what follows, we assume that $Q$ only has one connected component; if $Q$ is a rank 4 quiver that has more than one connected component, then the problem of mutation-acyclicity can be dealt with using the results in the previous section. 
The most applicable result in rank 4 (as well as higher ranks) is due to Buan, Marsh, and Reiten: Proposition~\ref{prop-BMR}.

\begin{definition}\label{def-full-subquiver}
    Let $Q$ be a quiver on $n$ vertices and $I \subseteq \{1,\ldots,n\}$ be a subset of the vertex set. 
    A quiver $Q_I$ is a \textit{full subquiver} of $Q$ if its vertex set is $(Q_I)_0 = I$ and its arrows are given by $ (Q_I)_1 = \{ \alpha \in Q_1 \ | \ s(\alpha),t(\alpha) \in I \}$.
\end{definition}

\begin{proposition} \cite[Corollary 5.3]{BMR2008Cluster} \label{prop-BMR}
    Let $Q$ be any quiver. 
    If any full subquiver of $Q$ is non-mutation-acyclic, then $Q$ is non-mutation-acyclic.
\end{proposition}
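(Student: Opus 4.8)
The plan is to prove the contrapositive: if $Q$ is mutation-acyclic, then every full subquiver of $Q$ is mutation-acyclic. So suppose $Q$ is mutation-acyclic, and let $I \subseteq \{1,\ldots,n\}$ be any subset of the vertex set; I want to show $Q_I$ is mutation-acyclic. The key idea is that full subquivers are, in a suitable sense, compatible with mutation at vertices lying inside the subset $I$: if $k \in I$, then mutating $Q$ at $k$ and then restricting to $I$ should give the same result as restricting to $I$ first and then mutating $Q_I$ at $k$. In symbols, the claim is that $\mu_k(Q)_I = \mu_k(Q_I)$ whenever $k \in I$. This is because all three steps in Definition \ref{def-quiver-mutation} — creating shortcut arrows $i \to j$ from $2$-paths $i \to k \to j$, reversing arrows at $k$, and cancelling $2$-cycles — only involve arrows incident to $k$ or arrows between vertices $i,j$ that are the non-$k$ endpoints of such a $2$-path; when $k \in I$, the arrows that participate are exactly those visible in $Q_I$, so the operation commutes with restriction.

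Granting this commutation lemma, the argument runs as follows. Since $Q$ is mutation-acyclic, there is a sequence of vertices $[k_1,\ldots,k_\ell]$ with $k_1,\ldots,k_\ell \in \{1,\ldots,n\}$ such that $Q' = \mu_{k_\ell}\cdots\mu_{k_1}(Q)$ is acyclic. First I would reduce to the case where every $k_j$ lies in $I$: mutations at vertices outside $I$ do not change the full subquiver on $I$ in any way relevant here — more precisely, I claim that for the specific purpose of tracking $Q_I$ one may simply delete from the sequence all indices $j$ with $k_j \notin I$. This needs a small argument, since a mutation at a vertex $k \notin I$ can change arrows among vertices of $I$ (via a $2$-path $i \to k \to j$ with $i,j \in I$); so the cleanest route is instead to keep the full sequence and apply the commutation lemma step by step to the sub-sequence of indices in $I$, using the observation that once we pass to $Q_I$, mutating at a vertex not in $I$ is simply not an allowed operation on $Q_I$, and that the restriction of the whole mutated quiver to $I$ is governed only by the $I$-mutations.

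So the honest version is: let $[k_{j_1},\ldots,k_{j_m}]$ be the subsequence of $[k_1,\ldots,k_\ell]$ consisting of those terms with $k_{j_t} \in I$. By iterating the commutation lemma — and checking that mutating at a vertex outside $I$ leaves the restriction to $I$ unchanged, which I'd verify directly from the three-step definition — one gets that the restriction $Q'_I$ equals $\mu_{k_{j_m}}\cdots\mu_{k_{j_1}}(Q_I)$. Since $Q'$ is acyclic, its full subquiver $Q'_I$ is also acyclic (a directed cycle in $Q'_I$ would be a directed cycle in $Q'$). Hence $Q_I$ is mutation-equivalent to the acyclic quiver $Q'_I$, so $Q_I$ is mutation-acyclic, completing the contrapositive.

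The main obstacle is the claim that mutating $Q$ at a vertex $k \notin I$ does not change the full subquiver $Q_I$ — this is false in general (the shortcut arrows from $2$-paths through $k$ can alter arrows between two vertices of $I$), so the argument cannot simply ``ignore'' outside mutations. The correct and slightly delicate point is that what survives in the restriction $Q'_I$ depends only on the interleaved sequence of $I$-mutations applied to $Q_I$, together with the fact that outside-mutations, while they may reshuffle arrows among $I$-vertices in the ambient quiver, produce exactly the effect predicted by the commutation lemma once we have committed to restricting to $I$. Making this bookkeeping precise — ideally by an induction on $\ell$ that maintains the invariant ``$\mu_{k_\ell}\cdots\mu_{k_1}(Q)$ restricted to $I$ equals the appropriate composition of $\mu$'s applied to $Q_I$'' — is the heart of the proof, and it rests entirely on the local nature of quiver mutation.
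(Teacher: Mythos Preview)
Your argument has a genuine gap, and you have already put your finger on it. The claim that mutating $Q$ at a vertex $k \notin I$ leaves the restriction $(\,\cdot\,)_I$ unchanged is false, and the attempted salvage via the induction invariant ``$\mu_{k_\ell}\cdots\mu_{k_1}(Q)$ restricted to $I$ equals the subsequence of $I$-mutations applied to $Q_I$'' is also false. A minimal counterexample: take $Q$ of type $A_3$ with arrows $1\to 2\to 3$ and set $I=\{1,3\}$. Then $Q_I$ has no arrows, while $\mu_2(Q)$ has an arrow $1\to 3$, so $\big(\mu_2(Q)\big)_I$ has one arrow; but the subsequence of $I$-mutations in $[2]$ is empty, so your invariant would predict $\big(\mu_2(Q)\big)_I = Q_I$. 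The invariant breaks at the very first outside mutation, and there is no elementary bookkeeping that repairs it: after interleaving several inside and outside mutations, the restriction to $I$ genuinely depends on the outside mutations in a way that cannot be expressed as any sequence of mutations of $Q_I$ alone.

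The paper does not supply its own proof of this proposition; it simply cites \cite[Corollary~5.3]{BMR2008Cluster}. The Buan--Marsh--Reiten argument is not a direct combinatorial manipulation of mutation sequences: it passes through the cluster category of a hereditary algebra and uses cluster-tilting theory to show that full subquivers of tilted quivers arise from tilting on appropriate subcategories, which then forces mutation-acyclicity to be inherited by full subquivers. In particular the representation-theoretic input is essential, and no purely elementary proof along the lines you sketch is known.
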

In other words, if $Q$ is a rank 4 quiver and contains a non-mutation-acyclic rank 3 quiver (a problem that has been dealt with in the previous section), then $Q$ itself is non-mutation-acyclic.
This result is often useful in practice, especially when dealing with quivers of higher rank, since a rank $n$ quiver contains ${n \choose 3}$ rank 3 subquivers, and Markov constants are easy to calculate.

On the other hand, there are rank 4 quivers in which all (proper) full subquivers are mutation-acyclic, but the quiver itself is non-mutation-acyclic. 
In fact, in his dissertation, Warkentin \cite{Warkentin2014Exchange} was able to provide an infinite family of examples of rank 4 quivers in which all of the proper subquivers are mutation-acyclic, but $Q$ is non-mutation-acyclic.
Specifically, he shows that quivers of the form \[Q = \begin{tikzcd}[row sep={4em,between origins}, column sep={4em,between origins}]
    1 & 2   \\
    4 & 3 
    \arrow[from=1-1, to=1-2, "a"]
    \arrow[from=1-2, to=2-2, "b"]
    \arrow[from=2-2, to=2-1, "a"]
    \arrow[from=2-1, to=1-1, "b"]
    \end{tikzcd}\]
where $a,b \geq 2$ are non-mutation-acyclic. 
While the ``box quivers'' contain no rank 3 non-mutation-acyclic subquivers, they are sometimes mutation-equivalent to quivers $Q'$ for which Proposition~\ref{prop-BMR} can be applied.

There is also a well-known pathological quiver called the ``dreaded torus'' that is obtained from the triangulation of a torus with one boundary component and one marked point on the boundary: \[Q = \begin{tikzcd}[row sep={4em,between origins}, column sep={4em,between origins}]
    & 4 \\
    & 2 \\
    1 & & 3 
    \arrow[from=3-1, to=3-3, shorten=0.2em]
    \arrow[from=3-3, to=2-2, shorten=0.2em]
    \arrow[from=3-1, to=2-2, shorten=0.2em]
    \arrow[from=1-2, to=3-1, shorten=0.2em]
    \arrow[from=1-2, to=3-3, shorten=0.2em]
    \arrow[from=2-2, to=1-2, "2"right, shorten=0.2em]
    \end{tikzcd}\]
The dreaded torus has the interesting property that mutation at any vertex produces an isomorphic copy of the dreaded torus quiver, making it effectively the only quiver in its mutation class (much like the Markov quiver).
Since it contains an oriented cycle, it is non-mutation-acyclic.
The existence of the dreaded torus (and possibly other pathological quivers) complicates the problem of deciding mutation-acyclicity using machines since Proposition~\ref{prop-BMR} is not applicable.
 
The goal of this paper is to utilize computational techniques to assist researchers in classifying rank 4 quivers into mutation-acyclic and non-mutation-acyclic.
To make theoretical progress on this problem, we show that the question is decidable for rank 4 quivers with at most 2 arrows between any pair of vertices.

\begin{theorem}\label{ca_theorem}
    Suppose we have a quiver $Q$ with 4 vertices, where the arrow weights take on values of 0, 1, or 2.
    Then $Q$ is non-mutation-acyclic if and only if one of the following conditions holds.
    \begin{enumerate}
        \item $Q$ contains a quiver in its mutation class that has a rank 3 non-mutation-acyclic subquiver. 
        \item $Q$ is isomorphic to the dreaded torus quiver.
    \end{enumerate} 
\end{theorem}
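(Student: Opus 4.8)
I would dispatch the ``if'' direction first, since it is short. If some $Q' \in [Q]$ contains a rank $3$ non-mutation-acyclic full subquiver, then $Q'$ is non-mutation-acyclic by Proposition~\ref{prop-BMR}, and as non-mutation-acyclicity depends only on the mutation class and $[Q]=[Q']$, so is $Q$. If $Q$ is the dreaded torus, then mutation at any vertex returns an isomorphic copy, so $[Q]$ is a single isomorphism class whose members all contain an oriented cycle; hence $[Q]$ has no acyclic quiver and $Q$ is non-mutation-acyclic.

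The ``only if'' direction is the substantial one, and I would attack it by a computer-assisted exhaustive search. First reduce to the connected case: a disconnected rank $4$ quiver has components of rank $\le 3$, mutation acts componentwise, so $Q$ is mutation-acyclic iff each component is; by Theorem~\ref{theorem:subquiver} this fails exactly when some component is the Markov quiver $M$, which is then a rank $3$ non-mutation-acyclic full subquiver, so condition (1) already holds. Assume henceforth $Q$ connected. The connected quivers on $4$ vertices with arrow weights in $\{0,1,2\}$ form a finite list up to isomorphism, obtained by assigning to each of the $\binom{4}{2}=6$ vertex pairs a signed weight in $\{-2,-1,0,1,2\}$ and quotienting by the $S_4$-action. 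For each representative $Q$ I would run a breadth-first traversal of $[Q]$, halting as soon as it reaches (a) an acyclic quiver, whence $Q$ is mutation-acyclic, or (b) a quiver admitting a rank $3$ full subquiver that Theorem~\ref{theorem:subquiver} certifies non-mutation-acyclic, whence $Q$ satisfies condition (1); one must invoke the general rank $3$ criterion here, not merely ``the Markov quiver'', because mutation of a rank $4$ quiver can create arrows of weight $>2$. The ``only if'' direction then reduces to the finite assertion that for every listed $Q$ this traversal halts with outcome (a) or (b), the sole exception being the dreaded torus, whose (finite) mutation class is exhausted with neither outcome.

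The step I expect to be the real obstacle is establishing that the traversals halt, since many rank $4$ mutation classes are infinite, among them the box quiver with $a=b=2$ and various affine types. For mutation-acyclic $Q$ an acyclic representative turns up quickly, but for the infinite non-mutation-acyclic classes one must argue that a representative of type (b) occurs within bounded distance. My plan is two-pronged: (i) a weight-bound argument showing that once a weight in the mutation class grows past a small explicit threshold a rank $3$ full subquiver with all arrows $\ge 2$ and Markov constant $\le 4$, or with negative Markov constant, is forced -- so the traversal meets outcome (b) before the weights can run away, confining the search to a finite set of bounded-weight quivers; and (ii) treating the finitely many mutation-finite classes by direct complete enumeration, using the classification of mutation-finite quivers (Felikson--Shapiro--Tumarkin) to know which rank $4$ classes are finite, and exhibiting explicit short mutation sequences -- for instance from the box quiver with $a=b=2$ to a quiver with a Markov subquiver -- to place the remaining infinite non-mutation-acyclic classes under condition (1). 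Should the clean threshold in (i) prove elusive, the fallback is purely computational: perform the appropriately pruned breadth-first search for each listed quiver, certify that each frontier empties, and read off the outcome classification; this already constitutes a valid proof, with the weight bound serving only to make termination transparent. Finally I would assemble the whole verification -- enumeration, traversals, and the bookkeeping that every class but the dreaded torus attains outcome (a) or (b) -- as mutually cross-checked \texttt{sagemath} computations.
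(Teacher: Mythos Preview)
Your proposal is correct and follows essentially the same computer-assisted exhaustive-search strategy as the paper. The paper in fact adopts exactly your computational fallback rather than any theoretical weight-bound or mutation-finite termination argument: it enumerates the $667$ isomorphism classes, runs the breadth-first mutation search with halting conditions (a) acyclic and (b) a rank~$3$ subquiver certified NMA by Theorem~\ref{theorem:subquiver} (plus a flag for exhausting a finite exchange graph, which catches the dreaded torus), and simply reports that every case resolves by mutation depth~$12$.
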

\begin{proof}
    The full details of this computer-assisted proof are available on \href{https://github.com/KTKAW/MACHINE_LEARNING_MUTATION_ACYCLICITY_OF_QUIVERS.git}{\texttt{GitHub}}; a summary of the logic is provided here.
    All quiver adjacency matrices for rank 4 quivers with arrow weights 0, 1, or 2 can be exhaustively generated, where the six entries of the upper triangle can take values from $\{-2,-1,0,1,2\}$, hence producing $5^6 = 15625$ matrices.
    Of these, those that were not (weakly) connected, and hence not truly rank 4, were removed, leaving $15104$ quivers.
    These adjacency matrices are still not unique up to graph isomorphism; therefore, a canonical form for the matrices was defined\footnote{Using lexicographical ordering of the matrix elements.} under this symmetry, and the set was reduced to $667$ isomorphism classes of quiver adjacency matrices.
    
    The representative quivers which are (strongly) acyclic are then trivially mutation-acyclic, 401 of the 667 were shown to be acyclic and were partitioned off into the mutation-acyclic side of the classification, leaving 266 unknown cases.
    Of these, 42 were shown to contain the Markov quiver as a subquiver, and two further cases were shown to be the 2-2-2-2 box quiver and dreaded torus.
    Partitioning these off resulted in 44 quivers on the non-mutation-acyclic side of the classification, with 222 remaining unknown cases.
    These 44 non-mutation-acyclic quivers were then each mutated exhaustively up to some set depth (8 in the case of this proof). 
    Of the quivers produced in this way, 2 matched one of the 222 remaining unknown cases, which were then labeled as non-mutation-acyclic, leaving 220 unknown cases.

    Each of these 220 unknown cases were then exhaustively mutated (using a breadth-first heuristic), producing their exchange graphs at ever greater depths until either an acyclic quiver was produced (and hence the initial quiver could be classified as mutation-acyclic) or a quiver which could be shown to be non-mutation-acyclic was produced (and hence the initial quiver could be classified as non-mutation-acyclic). 
    Additionally, a flag was added that if the entire exchange graph was generated without producing an acyclic quiver, the initial quiver was marked as non-mutation-acyclic.
    The checks for non-mutation-acyclicity were carried out by searching each produced quiver's 3-vertex subquivers, and, when the subquiver was a directed cycle, performing the checks for the conditions (1)-(3) in Theorem~\ref{theorem:subquiver}; or by a produced quiver being isomorphic to a quiver from the generated exchange graphs seeded by each of the non-mutation-acyclic quivers in the previous step.
    These checks classified the remaining 220 quivers, using mutation depths up to 12.

    Therefore, of the 667 isomorphism classes of rank 4 quivers with arrow weights 0, 1, or 2, the final class sizes (mutation-acyclic, non-mutation-acyclic) are (534, 133).
    These quivers are available on \href{https://github.com/KTKAW/MACHINE_LEARNING_MUTATION_ACYCLICITY_OF_QUIVERS.git}{\texttt{GitHub}}, listed as their respective adjacency matrices in canonical form. 
\end{proof}

\subsection{Machine Learning} \label{subsec:ml} \mbox{}\\
As a field, machine learning encompasses a broad range of computational techniques, often of statistical origin.
Within ML, this set of techniques can be subdivided into three core subfields: supervised, unsupervised, and reinforcement learning. 

Each subfield hosts methods suited to different data styles.
Specifically, supervised learning deals with paired data such that each input has a corresponding output, and the techniques then address problems of function fitting and approximation, fitting a general, highly parameterized function to map from input to output.
Unsupervised learning is in spirit data analysis, no output data is known for function fitting, so techniques are used to analyse the input dataset via methods of clustering and dimensional reduction.
The final subfield of reinforcement learning deals with input data that may be represented as a state space, where potential solutions to a problem can be evaluated with a score function, and then perturbed according to a set of actions.

The quiver data and problems considered here match most appropriately to methods in the first two subfields of supervised and unsupervised learning, and thus, in the following subsections, we introduce the techniques used from these areas in more detail.

\subsubsection{Supervised Learning}\label{sec:super}\mbox{}\\
Depending on whether the output data in each data pair comes from a finite set or not, a supervised learning problem can be categorised into either classification or regression.

In this work, the aim is to identify the mutation-acyclic property.
Therefore for input data as the adjacency matrix\footnote{A quiver's `adjacency matrix' is the same as its `exchange graph', the former terminology is more common in data science literature, whereas the latter is the term used within the cluster algebras field.} representation of a quiver, one can then assign a binary label to this quiver, which identifies whether it expresses the mutation-acyclic property or not.
Therefore, the dataset takes the form $(B,0)$ or $(B,1)$ where $B = B(Q)$ is the skew-symmetric adjacency matrix of a quiver, and the binary output label indicates whether the quiver is mutation-acyclic.
This output data is hence a finite set of size 2, making this problem a binary classification.

This dataset of pairs is partitioned into train data and test data, whereby the data is randomly shuffled, and 10\% of the data is partitioned off to be used to independently test the model's performance after training has completed.
Within the remaining train dataset, 30\% of this is then also partitioned off to form the validation set, which allows independent tracking of the performance throughout training (whereas the test data is only used after training is complete).
In practice, this process is generalised such that the dataset is partitioned 5 different ways (with the same split proportions but different shuffles of the full dataset prior to split), then 5 identically structured architectures are each trained and tested on one of these partitions.
This process is cross-validation and produces five performance measures, which can be averaged to improve statistical confidence in the reported learning performance.

Since this is a classification problem, performance is assessed based on how frequently the trained model correctly identifies the true output class for the input datapoint.
In this context, this is the ability of the model to identify whether the input quiver is mutation-acyclic or not.
The test performance is recorded in a confusion matrix: $C_{ij}$, from which the standard learning performance metrics are derived.
These metrics, along with their associated formulae, are provided in §\ref{app:ml_intro}.
Accuracy is used as the most standard interpretable metric, being the proportion of correctly classified test inputs, whilst Matthew's correlation coefficient (MCC) is used as a more stable, unbiased measure.
Both measures evaluate to 1 for perfect learning, whereas for no learning, accuracy evaluates to 0.5 (for balanced binary classes) and MCC to 0. 

During the training process, an optimiser algorithm will update the parameters of the model in order to minimise a loss function that evaluates the difference between the predicted and true output.
The standard loss function for binary classification problems is binary cross-entropy, as used here with the formula provided in §\ref{app:ml_intro}.

\paragraph{\textit{Neural Networks}}\mbox{}\\
Perhaps the most famous supervised learning architecture is that of neural networks (NNs).
Largely due to their amenability to a variety of problems across many fields.

NNs are built from an array of neurons, where each neuron represents the application of a linear and then a non-linear function.
For some input vector $\underline{x}$, the neuron acts as $\underline{x} \mapsto \phi(\underline{w}\cdot\underline{x} + b)$ for non-linear activation function $\phi$ and trainable parameters within the weight vector $\underline{w}$ and bias number $b$.
These neurons are then collected into layers, where every neuron in the layer receives the same input vector, being the concatenation of the all the previous layer's output numbers\footnote{The NN architecture described here is the prototypical dense feed-forward NN, which is quite general and reduces to other popular architectures such as Convolutional NNs on zeroing of certain parameters. 
Furthermore, more exotic architectures can be created by allowing layers to input to themselves or to previous layers creating cycles in the NN graph (developing into recurrent NNs and transformers).}.
To encourage function stability during the training process and reduce the chances of overfitting, each neuron's input from a previous neuron can be chosen to be set to zero with a fixed probability, known as the dropout probability.
The number of layers and number of neurons per layer are then hyperparameters which set the NN architecture, where the first layer neurons each receive the input vector (here the flattened adjacency matrix), and the output layer has as many neurons as the number of classes (for classification problems) and this layer's activation is set to a function which normalises the outputs to produce a probability distribution.
An example diagram of a NN is given in Figure \ref{fig:NN_diagram}.

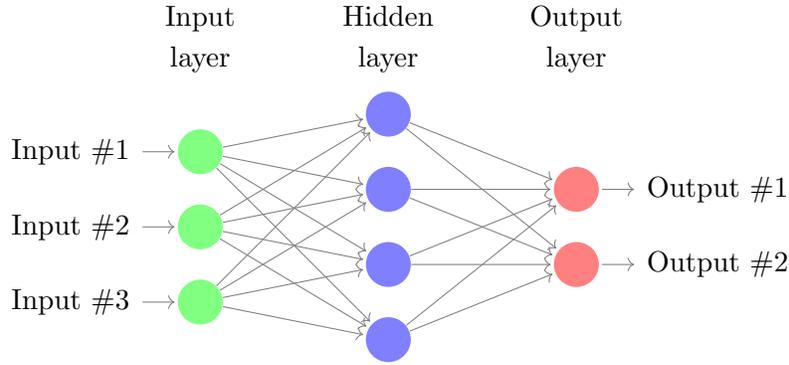
\begin{figure}[!t]
\centering
\begin{tikzpicture}[shorten >=1pt,->,draw=black!50, node distance=\layersep]
    \tikzstyle{every pin edge}=[<-,shorten <=1pt]
    \tikzstyle{neuron}=[circle,fill=black!25,minimum size=17pt,inner sep=0pt]
    \tikzstyle{input neuron}=[neuron, fill=green!50];
    \tikzstyle{output neuron}=[neuron, fill=red!50];
    \tikzstyle{hidden neuron}=[neuron, fill=blue!50];
    \tikzstyle{annot} = [text width=4em, text centered]
    \def\layersep{2.5cm}

    \foreach \name / \y in {1,...,3}
        \node[input neuron, pin=left:Input \#\y] (I-\y) at (0,-\y) {};

    \foreach \name / \y in {1,...,4}
        \path[yshift=0.5cm]
            node[hidden neuron] (H-\name) at (\layersep,-\y cm) {};

    \foreach \name / \y in {1,...,2}
        \path[yshift=-0.5cm]
            node[output neuron, pin={[pin edge={->}]right:Output \#\y}] (O-\name) at (2*\layersep,-\y cm) {};

    \foreach \source in {1,...,3}
        \foreach \dest in {1,...,4}
            \path (I-\source) edge (H-\dest);

    \foreach \source in {1,...,4}
        \foreach \dest in {1,...,2}
            \path (H-\source) edge (O-\dest);

    \node[annot,above of=H-1, node distance=1cm] (hl) {Hidden layer};
    \node[annot,left of=hl] {Input layer};
    \node[annot,right of=hl] {Output layer};
\end{tikzpicture}
\caption{An example neural network with one hidden layer; this can be generalised to more hidden layers and more neurons per layer. Each neuron represents the action of a linear and then a non-linear function on its input vector.}
\label{fig:NN_diagram}
\end{figure}

The choice of activation function determines the structure of the function approximation; the most common choice is the rectified linear unit (ReLU), which sets the full NN function to be piecewise linear.
There are generalisations of ReLU which smoothen the piecewise linear nature, including the scaled exponential linear unit (SELU), and the standard hyperbolic tangent function.
Finally, for classification problems, where output normalisation to a probability distribution is desired, the softmax activation function is often used on the final layer (as chosen for the architectures in this work).
This probability distribution over the output nodes is used in the loss for parameter updating, whilst for test classification, the output with the highest probability is selected as the predicted class for each test input datapoint.
The formulae for these activation functions are provided and described in §\ref{app:ml_intros_acts}.

Over the training process, the optimiser updates the parameters (weights and biases) according to some adaptation of stochastic gradient descent, here Adam \cite{kingma2017adam}.
The train dataset inputs are shuffled and split into batches of a prespecified size, then they are fed into the NN to produce test outputs which the loss is evaluated at after differentiating it with respect to the model parameters.
This process of iteratively updating parameters is repeated for all batches over the training dataset, completing one epoch.
After this, the train dataset is shuffled and resplit into batches, and the whole process is repeated, now iterating over a fixed number of prespecified epochs.
The step size in the parameter space taken by the optimiser with each update is scaled by a learning rate.
The batch size, number of epochs, and learning rate are further hyperparameters one can choose to define a NN architecture.

\paragraph{\textit{Support Vector Machines}}\mbox{}\\
Alternatively, perhaps the most optimised architectures for binary classification are support vector machines (SVMs).
Their interpretability is a significant advantage for distilling true mathematical insight from their learning, and a feature we aim to take advantage of in this study.

Considering vectorial input data of size $d$, one can imagine each input as a point in $\mathbb{R}^d$, with the dataset then populating this space.
For a binary classification problem, each datapoint is associated with a label indicating which of the two classes it belongs to.
The SVM architecture then seeks to find a linear codimension-1 hyperplane within $\mathbb{R}^d$ which best partitions the space such that either side of the plane corresponds to one of the classes.
The performance is measured by the number of points on the correct side of the hyperplane for their label, and then the distance of the hyperplane to the closest points on either side (which, when maximised, improves the stability of generalisation).

Figure \ref{fig:SVM_diagam} provides an example of a SVM fitting of 2-dimensional data, embedded in $\mathbb{R}^2$, such that the codimension-1 linear hyperplane is a straight line.
The data here is linearly separable, and hence, a hyperplane can be found that perfectly separates the data.
In fact, one can see that rotating the hyperplane slightly would still separate the data perfectly, and thus, for optimum generalisability, one seeks the maximum-margin hyperplane.
The margin is determined by orthogonally translating the hyperplane in both directions along the codimension-1 normal until the first point in each class is reached; the midpoint of this margin is the natural hyperplane position along its normal.
These first points are the \textit{support vectors} for the hyperplane, and completely determine it irrespective of the remaining datapoints.
The training process then seeks the hyperplane orientation that maximises this margin, as demonstrated by the shown hyperplane position in the figure.

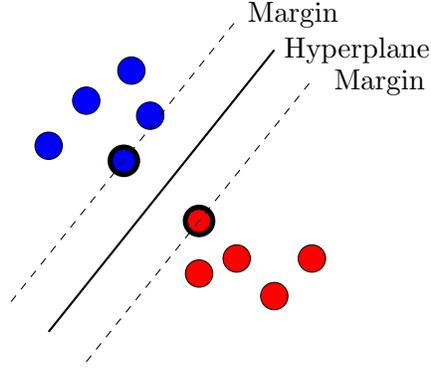
\begin{figure}[!t]
\centering
\begin{tikzpicture}
    \foreach \x/\y in {0.5/2.5, 1/3.1, 1.5/2.3, 1.6/3.5, 1.85/2.9} {
        \node[draw, circle, fill=blue] at (\x, \y) {};
    }
    \foreach \x/\y in {2.5/0.8, 3/1, 2.5/1.5, 3.5/0.5, 4/1} {
        \node[draw, circle, fill=red] at (\x, \y) {};
    }

    \node[draw, circle, fill=none, line width=0.7mm] at (1.5, 2.3) {};
    \node[draw, circle, fill=none, line width=0.7mm] at (2.5, 1.5) {};

    \draw[thick] (0.5,0.025) -- (3.5,3.775); 

    \draw[dashed] (0,0.425) -- (3,4.175);
    \draw[dashed] (1,-0.375) -- (4,3.375);

    \node at (4.6, 3.775) {Hyperplane};
    \node at (3.75, 4.25) {Margin};
    \node at (4.9, 3.35) {Margin};

\end{tikzpicture}
\caption{An example support vector machine in 2-dimensions. The datapoints are coloured according to the class they belong to (either blue or red), and the support vectors used to construct the codimension-1 supporting hyperplane with maximum margin are bolded. Here, the hyperplane is linear, but the kernel trick could be used to construct higher-degree hyperplane equations.}
\label{fig:SVM_diagam}
\end{figure}

In reality, it is unlikely a generic dataset will be linearly separable; in that case, the hinge loss is used in place of it, defined as
\begin{equation}\label{eq:hinge}
        \text{Hinge Loss} \vcentcolon = \sqrt{\Sigma_i w_i^2} + C \bigg(\frac{1}{N}\sum_k \bigg(\text{max}\big(0,1-y_k\big(\Sigma_i(w_ix_{i,k})-b\big)\big)\bigg)\bigg)\;
\end{equation}
for hyperplane equation $0=\Sigma_i(w_ix_i) - b$ on input vectors $x_{i,k}$ with class labels $y_k$, where index $i$ runs over the vector dimensions, and $k$ the index of each (input vector, output label) in the dataset (of which there are $N$).
The hyperplane equation parameters $(w_i,b)$ are learned in the training process, where $2b/\sqrt{\Sigma_i w_i^2}$ determines the margin size.
The use of the $\text{max}(\cdot )$ function means data points that lie on the outside of the margins (with equations $\Sigma_i (w_ix_i)-b = \pm 1$) do not contribute to the loss as they are already correctly classified.
The final parameter $C$ dictates the regularisation, where this is large the main part of the loss function dominates (the soft-margin loss) and the optimiser prioritises a model which fits well with many datapoints correctly classified; conversely where $C$ is low the regularisation first term in \eqref{eq:hinge} dominates prioritising a simpler hyperplane equation with less parameters.
Since the goal in the application of this architecture is interpretability, a low value of $C=1$ is used.

Overall, the SVM parameters $(w_i,b)$ are optimised throughout the training process in an iterative manner in order to minimise this hinge loss.
The dataset is again split into train and test data, where the test data is reserved for use after training to evaluate performance.

This implementation of SVMs seeks an optimal linear hyperplane; however, many datasets are not linearly separable, where an optimal non-linear hyperplane would be preferable.
To facilitate the SVM process accommodating non-linear hyperplane equations, the `kernel trick' is used.
Here, the non-linear terms in the hyperplane equations are considered as additional dimensions, effectively trading the non-linear degrees of freedom for linear degrees of freedom in a higher-dimensional space.
A linear hyperplane in this higher-dimensional space can then be mapped back to a non-linear hyperplane in the original space, and thus the ideology and process are very much the same.

The beauty in the trick is that the higher-dimensional embedding of each input is not required explicitly, saving substantial computational cost. 
Where the similarity between any two points in the higher-dimensional space is required for the training process, the kernel can compute this directly as a function of the original input vectors.
For this application, we are primarily concerned with interpretability and stick to the simplest non-linear kernels, which are polynomial.
For the fitting of a degree $\delta$ polynomial hyperplane, the kernel function form is:
\begin{equation}\label{eq:kernel}
    \text{Ker}(\hat{x}_{k_1},\hat{x}_{k_2}) \vcentcolon = (\gamma (\hat{x}_{k_1} \cdot \hat{x}_{k_2}) + \beta)^\delta\;,
\end{equation}
acting on input vectors $\hat{x}_{k_1}$ or $\hat{x}_{k_2}$ (where the dimension index is now implicit in the $\hat{\cdot}$ ), using additional input parameters $(\gamma,\beta)$ to optionally offset the hyperplane, here we stick to the default\footnote{Note for a linear kernel with $\delta=1$ this kernel reduces to the simple dot product in the original input space (for $\gamma=1,\beta=0$).} initialisations $\gamma=1/(\sigma^2d)$, $\beta=0$; for data dimension $d$ and data variance $\sigma^2$.

Once a SVM has been trained, the hyperplane equation is given by
\begin{equation}
    0 = \sum_{\tilde{k}} y_{\tilde{k}} \alpha_{\tilde{k}} \text{Ker}(\hat{x}_{\tilde{k}},\hat{x})+b\;,
\end{equation}
which can be evaluated for any input vector $\hat{x}$, and is based on the support vectors $\hat{x}_{\tilde{k}}$ from the dataset (i.e. $\tilde{k}$ does not run over all dataset vectors, just those which are support vectors) with labels $y_{\tilde{k}}$, explicitly using the kernel function for the specified degree as in \eqref{eq:kernel}, and the trained parameters\footnote{These are alike the $(w_i,b)$ in the linear case.} $(\alpha_{\tilde{k}},b)$. 

Using these supervised techniques, we aim to identify the mutation-acyclicity property in generic input quivers, particularly extracting full SVM equations to initiate interpretation.

\subsubsection{Unsupervised Learning}\label{sec:unsuper}\mbox{}\\
Depending on the goal of the data analysis, most unsupervised learning techniques can be applied for either dimensional reduction or clustering.

Prior to the application of supervised methods, it can be prudent to identify the optimal representation for the input data.
In this work, we address this goal via the prototypical method of dimensional reduction on the quiver adjacency matrix data: \textit{Principal Component Analysis} (PCA).

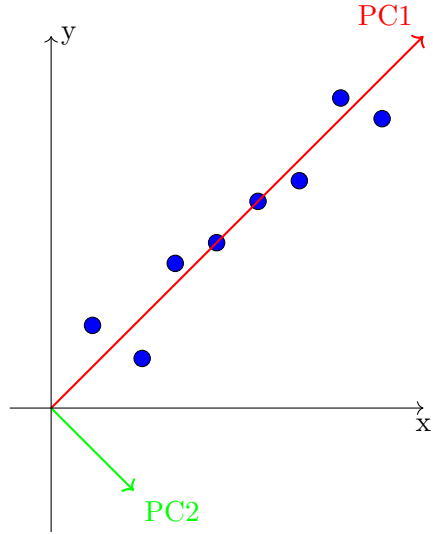
\begin{figure}[!t]
\centering
\begin{tikzpicture}[scale=0.55]

    \foreach \x/\y in {1/2, 2.2/1.2, 3/3.5, 4/4, 5/5, 6/5.5, 7/7.5, 8/7} {
        \node[draw, circle, fill=blue, scale=0.6] at (\x, \y) {};
    }

    \draw[thick, red, ->] (0,0) -- (9,9) node[anchor=south east] {PC1};
    \draw[thick, green, ->] (0,0) -- (2,-2) node[anchor=north west] {PC2};

    \draw[->] (-1,0) -- (9,0) node[anchor=north] {x};
    \draw[->] (0,-3) -- (0,9) node[anchor=west] {y};

\end{tikzpicture}
\caption{An example of principal component analysis in 2-dimensions. The datapoints are coloured in blue, and after diagonalisation of the covariance matrix, the principal components (scaled by their respective eigenvalues) are also shown on the plot. From their respective sizes, projecting all datapoints onto PC1 would be an optimal 1-dimensional linear projection.}
\label{fig:PCA_diagram}
\end{figure}

As a method, PCA utilises the diagonalisation of the dataset's covariance matrix, where for the input dataset $\{\underline{x}\}$, the covariance $K_{ij}$ is computed as
\begin{equation}
    K_{ij} = E(x_i - E(x_i))E(x_j - E(x_j))\;,
\end{equation}
for $x_i$ the $i$th component of the input vectors $\underline{x}$ and $E( \cdot )$ the expectation value over the dataset.
This matrix gives a centralised measure of second-order moments across the input components, and its diagonalisation produces eigenvectors, in this context called principal components, which determine a natural orthogonal basis for the data variation.
Since the covariance matrix is symmetric, the eigenvalues are all real\footnote{Actually, the covariance matrix is positive semi-definite, so the eigenvalues are also all non-negative.}, and provide a means of determining which principal components are most informative on the data's variation.
Hence, for $d$-dimensional input vectors, the $d$ principal components can be sorted in decreasing size of their eigenvalues.

Therefore, to perform a dimensional reduction from the $d$-dimensional input onto some linearly optimum $d'$-dimensional subspace, one may project the dataset using a projection matrix formed from the first $d'$ normalised principal components.
The proportion of the data variation explained by this representation may be determined by the sum of the respective $d'$ normalised eigenvalues\footnote{Each normalised eigenvalue is often called the explained variance ratio. Here normalisation is across the spectrum of eigenvalues, whereas for each eigenvector it is within each vector.}.
A diagram demonstrating PCA decomposition of a 2-dimensional dataset is shown in Figure \ref{fig:PCA_diagram}, a linearly\footnote{Note that PCA can be generalised to non-linear bases, also using kernels. However, here for this initial exploratory analysis, we focus on the simplest linear kernel.} optimum 1-dimensional projection would be along the first principal component.

\section{Data Generation}\label{sec:data}
To study the properties of quivers, we must generate datasets that can be easily fed into statistical learning algorithms.
In particular, recall that the operation of quiver mutation can be performed iteratively; thus, a starting point would be to generate datasets from exhaustively applying the mutation to a starting set of quivers.  
Methods of statistical learning are predominantly written in the language of linear algebra; hence, we must first express our quivers in this language. 
To this end, we represent every quiver in our dataset in terms of its exchange matrix $b_{ij}$, which for a rank $n$ quiver has size $n\times n$. 
For computational convenience, we then flatten the components of our exchange matrix to a $1 \times n^2$ dimensional vector.
In this paper, we restrict ourselves to rank $4$ quivers, which correspondingly means that our dataset will be comprised of $1 \times 16 $ dimensional vectors. 

\begin{ex} The quiver in Figure \ref{A41} has adjacency matrix
\begin{equation}
    \begin{pmatrix}
        0 & 1 & 0 & 0 \\ -1 & 0 & 1 & 0 \\ 0 & -1 & 0 & 1 \\ 0 & 0 & -1 & 0
    \end{pmatrix}\;,
\end{equation}
flattened to the input vector: $(0,1,0,0,-1,0,1,0,0,-1,0,1,0,0,-1,0)$. The respective output class label then depends on the investigation problem being learnt.
\end{ex}

For the three NN investigations carried out, we generate three datasets through this exhaustive mutation process: Dataset 1, Dataset 2, and Dataset 3. 

Dataset 1 was generated from four cluster algebra mutation classes of quivers, named here: A4-like, D4-like, Non-Mutation-Acyclic-1 (NMA1), and Non-Mutation-Acyclic-2 (NMA2), whose initial quivers (defining the cluster algebra) are respectively illustrated in Figures \ref{A4_GRAPHS}, \ref{D4_GRAPHS}, and \ref{NMA_GRAPHS}.
For each initial class, we mutate the quivers to depth 8. The number of entries for each class (as well as the class labelling associated with each class) is given in Table \ref{DATASET1TABLE}. 

Dataset 2 was similarly sourced from mutation classes with initial quivers depicted in Figures \ref{A4_GRAPHS}, \ref{D4_GRAPHS}, and \ref{NMA_GRAPHS}. In this instance, we mutate each of the initial quivers up to depth 6. We then combine the A4-Like and D4-Like quivers into a single shuffled input class of mutation-acyclic (MA) quivers, and the NMA1 and NMA2 into a single shuffled input class of non-mutation-acyclic (NMA) quivers. The number of entries and class label is described in Table \ref{DATASET2TABLE}. 

Dataset 3 is a larger equivalent of dataset 2, and is sourced from the exhaustive mutation of an initial set of mutation-acyclic quivers and non-mutation-acyclic quivers (see Table \ref{DATASET3TABLE}). For the mutation-acyclic class, we initially start off with a set of A4-like and D4-like quivers (as seen in \ref{A4_GRAPHS} and \ref{D4_GRAPHS}, explicitly acyclic) with either $1$, $2$, $3$, and $4$ edges between the vertices. We then remove any isomorphic quivers from this initial set. To generate the full mutation set, we mutate each quiver in the initial quiver set up to mutation depth $7$. Conversely, to generate the non-mutation-acyclic class, initial quivers given in Figures \ref{BOX_GRAPHS}, 
\ref{NMA_LIKE_S1_GRAPHS}, and \ref{NMA_LIKE_S2_GRAPHS} are used; with all the edges not a part of the $3$-cycle in Figure \ref{NMA_LIKE_S1_GRAPHS} having lengths between $0$ and $4$. Then, removing any graphs that are not connected, and removing any isomorphic quivers. We then mutate the dataset up to depth $5$.

The final investigation uses the more interpretable SVM architecture on a fourth dataset: Dataset 4. This dataset took the 667 rank 4 quivers with arrow weights $\{0,1,2\}$, classified according to being mutation-acyclic or non-mutation-acyclic via the proof of Theorem \ref{ca_theorem}, and generated their full isomorphism classes (counts presented in Table \ref{DATASET4TABLE}).

Emphasising that the exponential nature of the mutation process makes data generation very unpredictable, where different mutation classes can have significantly different sizes at the same depth. 
The depth choice was hence motivated by a goal to generate sufficiently many quivers per class, taken to be $\sim10000$, and then consistently use that depth across the classes in an investigation.  
This is why different depths were used between investigation datasets.
These datasets, as well as the generation scripts, are available at this paper's complementary \href{https://github.com/KTKAW/MACHINE_LEARNING_MUTATION_ACYCLICITY_OF_QUIVERS.git}{\texttt{GitHub}} repository.

\begin{table}[h!]
\centering
\begin{tabular}{|l|c|c|}
\hline
\textbf{Class} & \multicolumn{1}{l|}{\textbf{Number of Matrices}} & \multicolumn{1}{l|}{\textbf{Class Label}} \\ \hline
A4-Like        & 39363                                         & 0                                           \\ \hline
D4-Like        & 26242                                         & 1                                           \\ \hline
NMA1            & 13121                                         & 2                                           \\ \hline
NMA2            & 13121                                         & 3                                           \\ \hline
\end{tabular}
\caption{Dataset 1 Information}
\label{DATASET1TABLE}
\end{table}

\begin{table}[h!]
\centering
\begin{tabular}{|l|c|c|}
\hline
\textbf{Class} & \multicolumn{1}{l|}{\textbf{Number of Matrices}} & \multicolumn{1}{l|}{\textbf{Class Label}} \\ \hline
MA             & 7285                                          & 0                                           \\ \hline
NMA            & 2914                                          & 1                                           \\ \hline
\end{tabular}
\caption{Dataset 2 Information}
\label{DATASET2TABLE}
\end{table}

\begin{table}[h!]
\centering
\begin{tabular}{|l|c|c|}
\hline
\textbf{Class} & \multicolumn{1}{l|}{\textbf{Number of Matrices}} & \multicolumn{1}{l|}{\textbf{Class Label}} \\ \hline
MA             & 236142                                       & 0                                           \\ \hline
NMA            & 184785                                          & 1                                           \\ \hline
\end{tabular}
\caption{Dataset 3 Information}
\label{DATASET3TABLE}
\end{table}

\begin{table}[h!]
\centering
\begin{tabular}{|l|c|}
\hline
\textbf{Class} & \multicolumn{1}{l|}{\textbf{Number of Matrices}} \\ \hline
MA             & 12082                                     \\ \hline
NMA            & 3022                                          \\ \hline
\end{tabular}
\caption{Dataset 4 Information}
\label{DATASET4TABLE}
\end{table}

\newpage
\section{Results}\label{sec:ml}
Using the datasets described in §\ref{sec:data}, machine learning methods are now implemented to learn the mutation-acyclic property for input quivers. 
Since quivers can be mutation-acyclic or not, this is, in general, a problem of binary classification; thus, popular classification architectures are selected for experimentation: the ubiquitous NN architecture and the more interpretable SVM architecture.

\subsection{Neural Networks: Predicting Mutation-Acyclicity}\label{sec:nn}\mbox{}\\
As popular architectures for classification, NNs here are implemented for the task of identifying whether a generic input quiver is mutation-acyclic or not; a property extremely difficult to discern by eye (and generally analytically unknown for rank $>3$ cases).

In doing so, three experiments are carried out, each a classification between classes of quivers sourced from a variety of cluster algebras, these datasets were described in §\ref{sec:data}, and generated from initial quivers depicted in §\ref{app:quivers}.
The experiments start by classifying between mutation classes, then between superclasses of many mutation classes chosen such that all quivers in the superclass are either mutation-acyclic or non-mutation-acyclic.
The performance results are subsequently described.

\subsubsection{Classifying Mutation Classes}\label{sec:exp1}\mbox{}\\
As a warm-up exercise, this first experiment explores how effectively a NN can distinguish four different mutation classes of quivers, which we denote: A4-like, D4-like, Non-Mutation-Acyclic-1 (NMA1), and Non-Mutation-Acyclic-2 (NMA2); all contained within Dataset 1.
This is similar in style to the NN classification in \cite{Dechant:2022ccf}, albeit expanded to new mutation classes.

We utilize a fully connected deep neural network with $128$ nodes in each layer with either a SELU, ReLU, or tanh activation function (NN structure given in Figure \ref{First_Experiment_Class_NN}, established via heuristic hyperparameter tuning).
Between each layer, we use a dropout rate of $0.4$ to prevent overfitting. 
The final layer of the model had $4$ nodes, which are activated by a softmax function, to output a normalised probability distribution over the classes.
Dataset 1 is split into a train:test split of 90:10, with training data being split into a 70:30 split to accommodate the creation of a smaller validation training set. 
The model was trained over $10000$ epochs with a learning rate of $0.0001$ (using the Adam optimizer). In addition, each epoch utilized batching, using batch sizes of $200$. The model was trained to minimize the Sparse Categorical Cross-entropy loss function. 
In addition, the dataset was balanced such that there was an equal distribution of each class in the dataset (for the smallest class size $s$, uniformly sample $s$ quivers from each other class).

The model had an accuracy on test data of $61.8 \%$, with a MCC of $0.507$.
These performances represent notable learning, since no ability to identify the cluster algebra from an input quiver is represented by (accuracy, MCC) scores of (25\%, 0) respectively, which both of these substantially exceed.
Since the quiver data between classes is impossible to classify by eye, and largely impractical to do analytically via mutation, these methods already display practical use in aiding the identification of cluster algebras for input quivers.
These performance scores set a useful baseline for comparison in the subsequent experiments.

\subsubsection{Classifying Mutation-Acyclicity: Few Algebras}\label{sec:exp2}\mbox{}\\
In the second experiment, we tested how effective a NN was at determining the difference between mutation-acyclic quivers and non-mutation-acyclic quivers at rank 4. 
This experiment used Dataset 2 as its input, where two mutation classes of each type (mutation-acyclic or non-mutation-acyclic) were merged to form a larger database, causing the architecture to focus on the mutation-acyclicity property alone, instead of just the mutation class.
Again, the data was class-balanced to ensure an equal distribution.
The NN trained for this experiment is almost identical in structure to that used for the first experiment, with the only difference being that we have used $2$ nodes in the output layer as opposed to $4$ in experiment 1 of §\ref{sec:exp1}. Furthermore, the test:train split, validation split, learning rate, batch size, number of epochs, and optimizer are the same as those for experiment 1. 

\begin{figure}[!t]
    \centering
    \includegraphics[width =0.7\textwidth ]{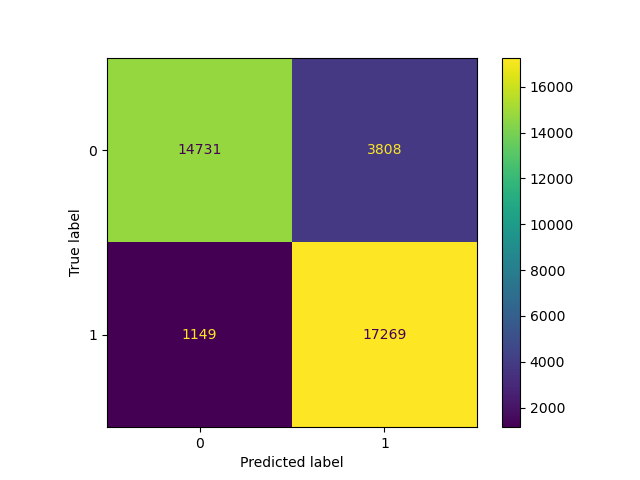}
    \caption{Confusion matrix for predictions on Dataset 3's randomly sampled test set averaged across the five cross-validation runs for mutation acyclic quivers (0) and non-mutation acyclic quivers (1).}
    \label{Fig:Con_Mat_EXP_3}
\end{figure} 

The model had an accuracy on test data of $87.7 \%$, with a MCC of $0.764$.
Null learning for two outputs leads to (accuracy, MCC) performance scores of (50\%, 0), and again, both the performances substantially exceed these values, demonstrating notable learning.
The more informative MCC measure is particularly impressive, far exceeding 0, and indicating that the mutation-acyclicity property can be well identified for this database of quivers.
This further hints at the existence of a combinatorial invariant of the quiver input data, which the NN function is approximating to then use for this classification.

\subsubsection{Classifying Mutation-Acyclicity: Many Algebras}\label{sec:exp3}\mbox{}\\
Following the promising results in §\ref{sec:exp2}, experiment 3 aimed to probe how the NN's ability to classify mutation-acyclic quivers and non-mutation-acyclic quivers would change under both a large increase in the data size and expanding the superclasses to include more cluster algebra mutation classes. To this end, we train the NN on the larger Dataset 3, where many more mutation classes of each type were used to form the classified mutation-acyclic/non-mutation-acyclic classes. 

Owing to the increased size of the dataset, we adopt a shallower\footnote{Since for this experiment we are primarily interested in the efficacy of the NN methods, the specific architecture is less important. Additionally, some experimentation with the previous architecture during hyperparameter tuning showed negligible performance differences despite longer training times.} NN architecture to the previous two experiments (as shown in Figure \ref{FIRST_EXP_NN_DIAGRAM_BIG_DATA}). 
We utilize a fully connected deep neural network with $128$ nodes in each layer with either a SELU or ReLU activation function.
Between each layer, we use a dropout of $0.3$ to avoid over-fitting. 
The final layer of the model had $2$ nodes, which are activated by a softmax function.
Dataset 3 is split into a train:test split of 90:10, with training data being further split into a 70:30 split to accommodate the creation of a smaller validation training set. 
The model was trained over $10000$ epochs with a learning rate of $0.0001$ (using the Adam optimizer). In addition, each epoch utilized batching, using batch sizes of $100$. The model was again trained to minimize the Sparse Categorical Cross-entropy function. 
In addition, the dataset was balanced such that there was an equal distribution of each class in the dataset.

As the final NN experiment, to provide confidence in the learning performances, cross-validation was implemented, such that the experiment was run 5 times, with 5 independently initialised NNs, using random shuffling of the data before splitting and training within each run. The results for test accuracy and MCC across the runs, as well as the average, are outlined in Table \ref{DATASET3RESULTSTABLE}. In addition, an averaged confusion matrix for predictions on the test set across all 5 runs is given in Figure \ref{Fig:Con_Mat_EXP_3}. 

\begin{table}[!t]
\centering
\begin{tabular}{|c|c|c|}
\hline
\textbf{Run} & \textbf{MCC} & \textbf{Test Accuracy Percentage} \\ \hline
1                & 0.731            & 86.3\%                 \\ \hline 
2                & 0.754            & 87.4\%                 \\ \hline 
3                & 0.745             & 86.9\%                 \\ \hline 
4                & 0.748           & 87.1\%                 \\ \hline 
5                & 0.759           & 87.6\%                 \\ \hline 
Average                & 0.747 $\pm$ 0.004           & 87.0 $\pm$ 
0.2 \%                \\ \hline 
\end{tabular}
\caption{Results for Dataset 3 Training across the five cross-validation runs, displaying also the average performance scores with standard error.}
\label{DATASET3RESULTSTABLE}
\end{table}

The results shown here express equally high performances to §\ref{sec:exp2}.
They are also consistent across the cross-validation runs, providing statistical confidence in the average performance scores.
Since the average MCC score far exceeds null performance ($0.747 \gg 0$, when MCC $\in [-1,1]$), this strongly supports the existence of a mutation-invariant combinatorial constant that the NNs are approximating, which dictates the mutation-acyclicity property, analogous to the Markov constant for rank 3 quivers.
To open the door to its likely highly complex structure, we turn to a more interpretable ML method: SVMs, as outlined in the subsequent section.

\subsection{Support Vector Machines: Identifying Separating Hyperplanes}\label{sec:sv} \mbox{}\\
The results of §\ref{sec:nn} show promising performances, indicating there is likely some underlying structure in the edge multiplicity combinatorics that these highly parameterised NN models can leverage.
However, as highly non-linear models, they are difficult to interpret and to extract mathematical insight from.

In this section, we shift focus to a complementary data setup (i.e., splitting into mutation-acyclic and non-mutation-acyclic), but now considering all possible quivers with an edge multiplicity $<3$.
This is Dataset 4, described in §\ref{sec:data}, from the proof of Theorem \ref{ca_theorem}.
Additionally, we use a more interpretable ML architecture: SVMs (as introduced in §\ref{sec:super}), with the intention that the simpler design and directly extractable equations will be more insightful to mathematicians seeking to design appropriate general invariants of mutation-acyclicity.

\subsubsection{Principal Component Analysis}\mbox{}\\
To develop an analytic understanding of the separation between NMA and MA  quivers, it is advantageous to first determine how each type of quiver is distributed within the dataset (Dataset 4). 
Recalling that each quiver is represented by a  $1\times16$ vector, we see that our dataset is defined within a $16$-dimensional space, which may be taken to be $\mathbb{R}^{16}$. 
For computational efficiency, it would be effective to probe which of the coordinate directions the dataset is either aligned with or most spread across. In addition, it would also be prudent to identify any patterns or correlations between the various coordinate directions. The overall aim of this procedure would be to find which directions are most significant and then project our dataset to only depend on these favorable coordinate directions.
PCA, as introduced in §\ref{sec:unsuper}, provides a useful tool in this instance to determine which of $16$ dimensions are the most pivotal in describing the overall nature of the data, and what efficient compressed representations exist. 

In performing PCA (with a linear kernel) on the dataset, we allow for the choice of a generic basis for which the dataset is transformed (which is an artifact of the algorithm e.g \textit{depending on the order of the dataset}) for this analysis. The eigenvalues of the PCA decomposition for this new generic basis are given in Table \ref{tab:PCA}. 
From Table \ref{tab:PCA}, it becomes clear that the first 6 dimensions (which share the \textit{same} eigenvalue) are the most important for our analysis, as they are by far the largest eigenvalues we have for all $16$ dimensions. 

In Figure \ref{PCA_DISPLAY}, we plot the spread of the dataset pairwise between each of these $6$ dimensions.
The above results imply that we can form an eigenbasis from the $6$ most important directions, onto which we can project our dataset, to reduce its dimensionality (from $16$ to $6$).

\begin{longtable}[!t]{|c|c|c|}
\hline
\textbf{}                        & \textbf{Eigenvalue}          \\ \hline
\multicolumn{1}{|c|}{\textbf{1}} & \textit{0.16666666666666674} \\ \hline
\multicolumn{1}{|c|}{\textbf{2}} & \textit{0.16666666666666674} \\ \hline
\multicolumn{1}{|c|}{\textbf{3}} & \textit{0.16666666666666669} \\ \hline
\multicolumn{1}{|c|}{\textbf{4}} & \textit{0.16666666666666669} \\ \hline
\multicolumn{1}{|c|}{\textbf{5}} & \textit{0.16666666666666663} \\ \hline
\multicolumn{1}{|c|}{\textbf{6}} & \textit{0.16666666666666657} \\ \hline
\textbf{7,8,...,16}                       & 0       \\ \hline
\caption{Eigenvalues for PCA transformed rank 4 quiver dataset (Dataset 4), those $<10^{-30}$ have been rounded to $0$.} 
\label{tab:PCA}
\end{longtable}

\begin{figure}[!t]
    \centering
    \includegraphics[width =\textwidth ]{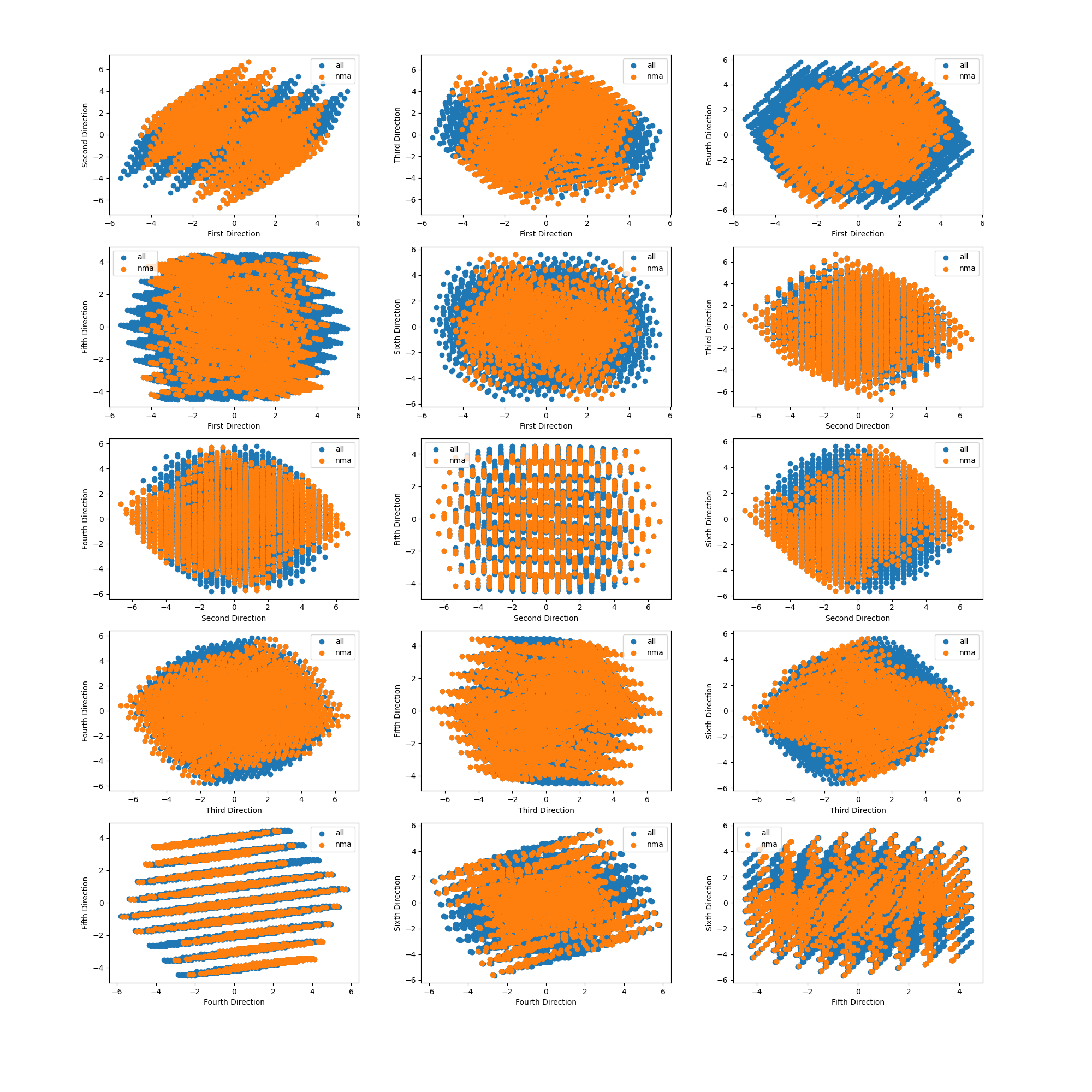}
    \caption{Spread of non-mutation-acyclic quivers (orange dots) vs mutation-acyclic quivers (blue dots) for pairwise projections of the 6 principal directions chosen from PCA of Dataset 4.}
    \label{PCA_DISPLAY}
\end{figure} 

However, before we proceed any further, we observe that the exchange matrix for a rank $4$ quiver is:
\begin{equation}
b_{ij}=
\begin{pmatrix}
0 & e_{0}& e_{1}&e_{2}\\ 
-e_{0} & 0 & e_{3} & e_{4} \\ 
 -e_{1} & -e_{3} & 0 & e_{5}\\
 -e_{2} & -e_{4} & -e_{5} & 0
\end{pmatrix},
\label{equ:exchange_matrix}
\end{equation}

due to the representation's antisymmetry, where $e_{0}$ through to $e_{5}$ are integers. 
Therefore, these input exchange matrices have 6 degrees of freedom, and can naturally be represented in a 6-dimensional space as the vector: $(e_0,e_1,e_2,e_3,e_4,e_5)$.

The PCA eigenvalues nicely corroborate this 6-dimensional representation, confirming that no further sensible linear compression is possible, and since the method is linear, the PCA eigenvectors are some linear combination of these values.
Therefore, motivated by the PCA, the next investigations will use this lossless compressed 6-dimensional representation as the quiver input. 

\subsubsection{SVM Analysis}\mbox{}\\
With a more natural and efficient representation of the input quiver data, we can now train a SVM, as introduced in §\ref{sec:super}, on the dataset to identify a $5$-dimensional (codimension-1) separating hyperplane between the mutation-acyclic (MA) and non-mutation-acyclic (NMA) quivers in our dataset. To maximise interpretability, the SVM analysis was initially applied with a linear kernel, then generalised with polynomial kernels of increasing degree, covering orders $1$ to $26$.
In order to classify the dataset (of Dataset 4) into either MA or NMA, we assigned MA quivers a label of $-1$, and NMA quivers the label of $1$. Moreover, we assigned class weights, with MA quivers having a weight of $1$ and NMA quivers having a weight of $3.998$. 
This was done to reflect that NMA quivers had a smaller population than MA in the dataset, and this weighting for SVM training exactly inverts this proportion such that both classes are viewed with equal importance in the training loss. Additionally, the regularisation parameter $C$ was set to $1$, leading to a higher regularisation and prioritising identifying a simpler, more interpretable hyperplane.

We outline the learning results for each kernel order on the data in Figure \ref{PCA_MC} and Table  \ref{tab:SVMRAW}. Here, we take MCC to be the test accuracy measure, as it is better adapted to the uneven class sizes. 

\begin{figure}[!t]
    \centering
    \includegraphics[width=0.7\textwidth]{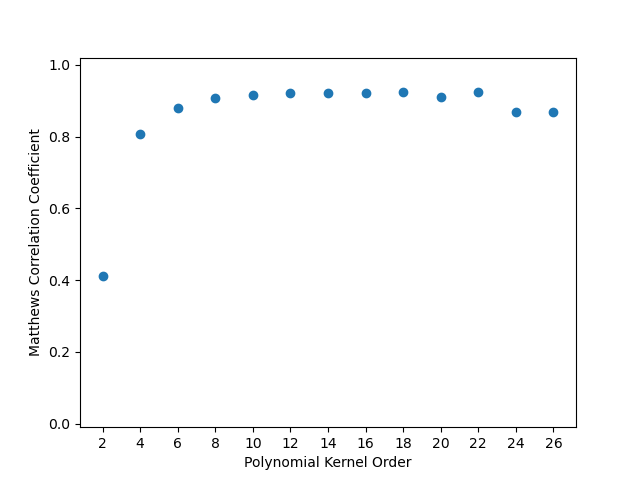}
    \caption{Final MCC scores for Support Vector Machines with even polynomial kernels of increasing degree, trained on the compressed 6-dimensional input for rank 4 quivers with edge multiplicity $<3$.}
    \label{PCA_MC} 
\end{figure}

From Table \ref{tab:SVMRAW}, we see that odd ordered polynomials kernels prove completely ineffective in separating NMA from MA; likely since the data naturally contains all quivers in each relabelling isomorphism orbit, so is naturally symmetric, which the odd degree functions cannot accommodate.
However, even ordered polynomial kernels prove more successful in separating the classes. In particular, the order $18$ polynomial from the degree 18 kernel exhibited the highest classification performance between NMA and MA quivers, with a MCC of 0.924.  

The MCC performance results for each polynomial kernel are listed in Table \ref{tab:SVMRAW}, with the number of terms in each polynomial equation. 
Additionally, the analytic equations for the identified optimally-classifying SVM hyperplanes are available at the paper's \href{https://github.com/KTKAW/MACHINE_LEARNING_MUTATION_ACYCLICITY_OF_QUIVERS.git}{\texttt{GitHub}}.

At this juncture, we also note that additional SVM training was performed on the so-called canonical form of Dataset 4 (isomorphism classes). This resulted in, at face value, favourable training results (an MCC score of $0.97$ at SVM polynomial kernel order $12$); however, the number of terms in the resultant classification polynomials exceeded the number of degrees of freedom for the system; hence, they are not reported here as memorisation schemes are likely occurring.

The equation determined from the order $6$ polynomial is the most interesting to us, as it exhibits excellent prediction despite a low kernel degree and number of terms\footnote{The regularisation contribution could have been increased to discourage superfluous terms in the SVM equation, however this shifts the learning goal away from the core problem and the interpretation of the near-zero coefficients of non-linear basis monomials is somewhat ambiguous. Hence, focus on a smaller, more interpretable basis is desirable.}.
Since Dataset 4 contains 15104 quivers, and respectively 90624 input integers, despite these equations being long (where the degree 6 equation has 464 terms), this is still orders of magnitude less degrees of freedom than the input dataset; such that the SVMs are consolidating the mutation-acyclic property quite significantly.
The existence of an equation that can classify so well solidifies this motivation behind the existence of a mutation invariant for rank 4 quivers, which determines the mutation-acyclicity property.
Despite this degree 6 equation perhaps still being over-parameterised, the explicit nature of the equation's representation causes it to be a combinatorial invariant, with the hope that the equation provides some hints towards a more intuitive construction.
{
\begin{table}[!t]
\centering
\begin{tabular}{|c|c|c|c|}
\hline
\multicolumn{1}{|l|}{\textbf{Polynomial Kernel Degree}} & \textbf{MCC} & \multicolumn{1}{c|}{\textbf{Number of Terms}} \\ \hline
\textbf{1}                                       & \textit{0.000}                  & 8                                                                            \\ \hline 
\textbf{2}                                       & \textit{0.410}   & 22                                                                                    \\ \hline 
\textbf{3}                                       & \textit{0.000}                  & 58                                                                            \\ \hline 
\textbf{4}                                       & \textit{0.807}   & 128                                                                                 \\ \hline 
\textbf{5}                                       & \textit{0.000}                  & 254                                                                                
\\ \hline 
\textbf{6}                                       & \textit{0.879}                  & 464     
 
\\ \hline 
\textbf{8}                                       & \textit{0.907}                  &     1288                                                                               \\ \hline 
\textbf{10}                                       & \textit{0.916}                  & 3004                                                                                 \\ \hline 
\textbf{12}                                       & \textit{0.921}                  & 6189                                                    
\\ \hline 
\textbf{14}                                       & \textit{0.921}                  & 11629                                                                               
\\ \hline 
\textbf{16}                                       & \textit{0.921}                  & 20350                                                                                  \\ \hline 
\textbf{18}                                       & \textit{0.924} 
 & 33650
 
\\ \hline 
\textbf{20}                                       & \textit{0.909} 
 & 53132

 \\ \hline 
\textbf{22}                                       & \textit{0.924} 
 & 80740
  
 \\ \hline 
\textbf{24}                                       & \textit{0.868} 
 & 118800
 
\\ \hline 
\textbf{26}                                       & \textit{0.868} 
 & 170044

\\ \hline
\end{tabular}
\caption{MCC of the trained SVM hyperplanes with varying degree polynomial kernels, reported to 3 decimal places; shown with the hyperplane equation number of terms.}
\label{tab:SVMRAW}
\end{table}
}

\paragraph{\textit{Higher Edge Multiplicities}}\mbox{}\\
As a final remark on these results, we note again that the analytic expression for the separating hyperplanes was derived from a dataset that contained rank 4 quivers with either $0$, $1$, or $2$ edges between each vertex. 
An immediate question to ask is how well the hyperplane generalises to classify rank 4 quivers with edge multiplicities $\geq 3$. 

In testing this, we trial the machine learnt degree 18 polynomial kernel equation on a selection of quivers with higher edge multiplicities; as shown in §\ref{app:hem_quivers}, all of which are classified correctly.
These quivers include the A4-like type (Figure \ref{3A4}) and NMA type (Figure \ref{3NM}) edited to include edges with multiplicity $\geq 3$, as well as a quiver for each of these reached after a random sequence of mutations (Figures \ref{3A4m}, \ref{3NMm}), finally each of these types is also shown with much larger edge multiplicities (Figures \ref{3A4p}, \ref{3NMp}). 

On this paper's respective \href{https://github.com/KTKAW/MACHINE_LEARNING_MUTATION_ACYCLICITY_OF_QUIVERS.git}{\texttt{GitHub}} repository a script is also available to test any quiver of the reader's choosing for mutation-acyclicity using this trained degree-18 SVM; simply enter the adjacency matrix of the quiver and the script will output a prediction.
How the performance extends in general beyond Dataset 4, to higher edge multiplicities, and how the mutation invariant defined by the trained SVM polynomial can be generalised in these cases is left to future work.

\section{Conclusion}\label{sec:conc}

In this paper, we explored the potential for ML techniques to tackle a complex problem within quiver mutation, specifically in detecting mutation-acyclicity. Our experiments yielded strong results, with NNs and SVMs achieving high accuracy in distinguishing mutation-acyclic quivers from their non-mutation-acyclic counterparts. 

While these findings affirm the efficacy of ML in this domain, they also open new avenues for further investigation.
The explicit equation formulations learnt by the SVMs of each kernel degree (available on \href{https://github.com/KTKAW/MACHINE_LEARNING_MUTATION_ACYCLICITY_OF_QUIVERS.git}{\texttt{GitHub}}) aim to provide a starting basis for distilling mathematical insight about the design of mutation-invariant combinatorial constants that identify mutation-acyclicity, alike the Markov constant, for higher rank quivers. 
This interpretative phase will be crucial for translating the empirical success of ML into theoretical advancements. 

Additionally, the successes of these ML investigations motivated the computational work in proving the new Theorem \ref{ca_theorem}, another central result of this paper. 
It is worth emphasising in the previously classified rank 3 case of Theorem \ref{theorem:subquiver} (2), the exceptional cases for identifying mutation-acyclicity all have edge weights $\leq 2$, as has been considered for this Theorem in the rank 4 case. 
The new Theorem's low edge-weight results for rank 4, hence hope to be useful for future work for a more general result by providing an exhaustive check of these likely exceptional cases, as well as to be useful in practical work where academics often use low edge-weight quivers in cluster algebra seeding. 

In summary, this work not only establishes a solid foundation for the use of ML in quiver mutation analysis but also provides additional evidence for the exciting possibilities that lie ahead in integrating data-driven approaches with traditional mathematical research. By continuing to embrace this ML framework, we aim to deepen our understanding of mathematical structures and contribute to the broader intersection of machine learning and mathematics research.
\section*{Acknowledgements}
KAW is supported by a studentship from the UK Engineering and Physical Sciences Research Council (EPSRC) and by the UK Science and Technology Facilities Council (STFC) Consolidated Grant ST/P000754/1
“String theory, gauge theory and duality”. 
EH acknowledges support from Pierre Andurand over the course of this research.
KHL's work was partially supported by a grant from the Simons Foundation (\#712100).
This research utilised Queen Mary's Apocrita HPC facility \cite{apocrita}, supported by QMUL Research-IT.



\appendix
\section{Appendix}
\subsection{Machine Learning Metrics}\label{app:ml_intro}\mbox{}\\
Test performances in supervised classification problems are recorded in a confusion matrix: $C_{ij}$.
In the binary classification scenario, this is a $2 \times 2$ matrix where the row index is the true class of the input and the column index is the model-predicted class of the input.
As the test data is run through the model, the respective $C_{ij}$ entry is incremented according to each test input's true and predicted class; then, after testing is complete, the matrix is appropriately normalised.
All popular classification performance measures are then computed as functions of this matrix, the most common being accuracy, which is just the proportion of correctly classified inputs and is equal to the matrix trace.
However, this measure is prone to bias where the true classes are unbalanced; therefore, a more appropriate measure, which includes contribution from the off-diagonal components (the statistical Type I and Type II errors), is Matthew's correlation coefficient (MCC), defined, along with accuracy, as:
\begin{align}
    \text{Accuracy } & \vcentcolon = \sum_i C_{ii} = C_{00} + C_{11}\;,\\
    \text{MCC } & \vcentcolon = \frac{C_{11}C_{00}-C_{01}C_{10}}{\sqrt{(C_{11}+C_{01})(C_{11}+C_{10})(C_{00}+C_{01})(C_{00}+C_{10})}}\;,
\end{align}
both measures evaluate to 1 for perfect learning, whereas for no learning, accuracy evaluates to 0.5 (for balanced binary classes) and MCC to 0. 

In these classification problems, the output is often a discrete probability distribution over the possible classes, from which the loss function used in training, binary cross-entropy, is defined as:
\begin{equation}
    \text{Binary Cross-Entropy} \vcentcolon = \frac{1}{N}\sum_i^N (p_{true=0,i}\text{log}(p_{pred=0,i}) + p_{true=1,i}\text{log}(p_{pred=1,i}))\;,
\end{equation}
where the average is taken over the training batch of size $N$, using the true probabilities $p_{true=\#,i}$ of the output being in either class 0 or 1 (which as the data is labelled is exactly 1 for one probability and 0 for the other), and the predicted probabilities $p_{pred=\#,i}$ (satisfying $1=p_{pred=0,i}+p_{pred=1,i} \ \forall i$).

\subsubsection{\textit{Neural Network Activations}}\label{app:ml_intros_acts}\mbox{}\\
Fundamental to the structure of neural network architectures is their non-linearity, introduced by the choice of an activation function.
The standard options, as used in the architectures in this work, are detailed below.

The choice of activation function determines the structure of the function approximation; the most common choice is the rectified linear unit (ReLU), defined:
\begin{equation}
    \phi_{\text{ReLU}}(x) \vcentcolon =  \begin{cases} x \quad & x\geq 0, \\ 0 \quad & \text{otherwise}. \end{cases}\;
\end{equation}

This choice sets the full NN function to be piecewise linear, and then it is this function style used to approximate the true map connecting the input to the output spaces.
There are generalisations of ReLU which smoothen the piecewise linear nature, a candidate used here is the scaled exponential linear unit (SELU), defined:
\begin{equation}
    \phi_{\text{SELU}}(x) \vcentcolon =  \begin{cases} \lambda x \quad & x\geq 0, \\ \lambda \alpha (e^x -1) \quad & \text{otherwise} , \end{cases}\;
\end{equation}
for constants $(\lambda, \alpha)$ taking their standard values $(1.05070098,1.67326324)$. 
Another common activation also used is the standard hyperbolic tangent function, $\phi_{\text{tanh}}(x) = \text{tanh}(x)$.
For the final output layer, where output normalisation to a probability distribution is desired, the softmax activation function is used:
\begin{equation}
    \phi_{\text{softmax}}(x_i) \vcentcolon = \frac{e^{x_i}}{\sum_j e^{x_j}}\;,
\end{equation}
acting on the $i$th node output in the output layer $x_i$, normalising using a sum over all $j$ nodes in the output layer.

\subsection{Initial Quivers}\label{app:quivers}\mbox{}\\
Here, the initial quivers used to seed the generation of quivers in the mutation classes considered are shown.
The generated mutation classes of quivers were then combined in various ways to form the datasets, described in §\ref{sec:data}, used in the ML investigations of §\ref{sec:ml}.

\begin{figure}[H]
    \centering
    \begin{subfigure}[b]{0.3\textwidth}
         \centering
         \begin{tikzcd}[row sep={4em,between origins}, column sep={4em,between origins}]
            1 & 2 & 3 & 4 
            \arrow[from=1-1, to=1-2, shorten=0.2em]
            \arrow[from=1-2, to=1-3, shorten=0.2em]
            \arrow[from=1-3, to=1-4, shorten=0.2em]
         \end{tikzcd}
         \caption{A4 Type 1}
         \label{A41}
    \end{subfigure} \hfill
    \begin{subfigure}[b]{0.3\textwidth}
         \centering
         \begin{tikzcd}[row sep={4em,between origins}, column sep={4em,between origins}]
            1 & 2 & 3 & 4 
            \arrow[from=1-1, to=1-2, "2", shorten=0.2em]
            \arrow[from=1-2, to=1-3, "2", shorten=0.2em]
            \arrow[from=1-3, to=1-4, "2", shorten=0.2em]
         \end{tikzcd}
         \caption{A4 Type 2}
         \label{A42}
    \end{subfigure} \hfill
    \begin{subfigure}[b]{0.3\textwidth}
         \centering
         \begin{tikzcd}[row sep={4em,between origins}, column sep={4em,between origins}]
            1 & 2 & 3 & 4 
            \arrow[from=1-1, to=1-2, shorten=0.2em]
            \arrow[from=1-3, to=1-2, "2"above, shorten=0.2em]
            \arrow[from=1-3, to=1-4, shorten=0.2em]
         \end{tikzcd}
         \caption{A4 Type 3}
         \label{A43}
    \end{subfigure}
    \caption{A4-like Quivers}
    \label{A4_GRAPHS}
\end{figure}

\begin{figure}[H]
     \centering
     \begin{subfigure}[b]{0.3\textwidth}
         \centering
         \begin{tikzcd}[row sep={4em,between origins}, column sep={4em,between origins}]
            2 & 1 & 3 \\
            & 4 &
            \arrow[from=1-2, to=1-1, shorten=0.2em]
            \arrow[from=1-2, to=1-3, shorten=0.2em]
            \arrow[from=1-2, to=2-2, shorten=0.2em]
         \end{tikzcd}
         \caption{D4 Type 1}
         \label{D41}
     \end{subfigure}
     \quad
     \begin{subfigure}[b]{0.3\textwidth}
         \centering
         \begin{tikzcd}[row sep={4em,between origins}, column sep={4em,between origins}]
            2 & 1 & 3 \\
            & 4 &
            \arrow[from=1-2, to=1-1, shorten=0.2em]
            \arrow[from=1-2, to=1-3, shorten=0.2em]
            \arrow[from=1-2, to=2-2, "2", shorten=0.2em]
         \end{tikzcd}
         \caption{D4 Type 2}
         \label{D42}
     \end{subfigure}
        \caption{D4-like Quivers}
        \label{D4_GRAPHS}
\end{figure}

\begin{figure}[H]
     \centering
     \begin{subfigure}[b]{0.3\textwidth}
         \centering
         \begin{tikzcd}[row sep={4em,between origins}, column sep={4em,between origins}]
            1 & 2 & 4 \\
            & 3 &
            \arrow[from=1-1, to=1-2, "2", shorten=0.2em]
            \arrow[from=1-2, to=2-2, "2", shorten=0.2em]
            \arrow[from=2-2, to=1-1, "2", shorten=0.2em]
            \arrow[from=1-2, to=1-3, "2", shorten=0.2em]
         \end{tikzcd}
         \caption{NMA Type 1}
         \label{NMA1}
     \end{subfigure}
     \quad
     \begin{subfigure}[b]{0.3\textwidth}
         \centering
         \begin{tikzcd}[row sep={4em,between origins}, column sep={4em,between origins}]
            1 & 2 \\
            4 & 3 
            \arrow[from=1-1, to=1-2, "2", shorten=0.2em]
            \arrow[from=1-2, to=2-2, "2", shorten=0.2em]
            \arrow[from=2-2, to=2-1, "2", shorten=0.2em]
            \arrow[from=2-1, to=1-1, "2", shorten=0.2em]
         \end{tikzcd}
         \caption{NMA Type 2}
         \label{NMA2}
     \end{subfigure}
        \caption{NMA Quivers}
        \label{NMA_GRAPHS}
\end{figure} 

\begin{figure}[H]
     \centering
     \begin{subfigure}[b]{0.3\textwidth}
         \centering
         \begin{tikzcd}[row sep={4em,between origins}, column sep={4em,between origins}]
            1 & 2 \\
            4 & 3 
            \arrow[from=1-1, to=1-2, "3", shorten=0.2em]
            \arrow[from=1-2, to=2-2, "2", shorten=0.2em]
            \arrow[from=2-2, to=2-1, "3", shorten=0.2em]
            \arrow[from=2-1, to=1-1, "2", shorten=0.2em]
         \end{tikzcd}
         \caption{Box Quiver 1}
         \label{B2}
     \end{subfigure}
     \quad
     \begin{subfigure}[b]{0.3\textwidth}
         \centering
         \begin{tikzcd}[row sep={4em,between origins}, column sep={4em,between origins}]
            1 & 2 \\
            4 & 3 
            \arrow[from=1-1, to=1-2, "3", shorten=0.2em]
            \arrow[from=1-2, to=2-2, "3", shorten=0.2em]
            \arrow[from=2-2, to=2-1, "3", shorten=0.2em]
            \arrow[from=2-1, to=1-1, "3", shorten=0.2em]
         \end{tikzcd}
         \caption{Box Quiver 2}
         \label{B3}
     \end{subfigure}
        \caption{NMA Quivers Box Type}
        \label{BOX_GRAPHS}
\end{figure}

\begin{figure}[H]
     \centering
     \begin{subfigure}[b]{0.3\textwidth}
         \centering
         \begin{tikzcd}[row sep={4em,between origins}, column sep={4em,between origins}]
            & 4 \\
            & 2 \\
            1 & & 3
            \arrow[from=3-1, to=3-3, "2", shorten=0.2em]
            \arrow[from=3-3, to=2-2, "2", shorten=0.2em]
            \arrow[from=2-2, to=3-1, "2", shorten=0.2em]
            \arrow[from=3-1, to=1-2, shorten=0.2em]
            \arrow[from=3-3, to=1-2, shorten=0.2em]
            \arrow[from=2-2, to=1-2, shorten=0.2em]
         \end{tikzcd}
         \caption{M1}
         \label{M1}
     \end{subfigure}
     \hfill
     \begin{subfigure}[b]{0.3\textwidth}
         \centering
         \begin{tikzcd}[row sep={4em,between origins}, column sep={4em,between origins}]
            & 4 \\
            & 2 \\
            1 & & 3 
            \arrow[from=3-1, to=3-3, "2", shorten=0.2em]
            \arrow[from=3-3, to=2-2, "2", shorten=0.2em]
            \arrow[from=2-2, to=3-1, "2", shorten=0.2em]
            \arrow[from=1-2, to=3-1, shorten=0.2em]
            \arrow[from=3-3, to=1-2, shorten=0.2em]
            \arrow[from=2-2, to=1-2, shorten=0.2em]
         \end{tikzcd}
         \caption{M2}
         \label{M2}
     \end{subfigure}
     \hfill
     \begin{subfigure}[b]{0.3\textwidth}
         \centering
         \begin{tikzcd}[row sep={4em,between origins}, column sep={4em,between origins}]
            & 4 \\
            & 2 \\
            1 & & 3 
            \arrow[from=3-1, to=3-3, "2", shorten=0.2em]
            \arrow[from=3-3, to=2-2, "2", shorten=0.2em]
            \arrow[from=2-2, to=3-1, "2", shorten=0.2em]
            \arrow[from=1-2, to=3-1, shorten=0.2em]
            \arrow[from=1-2, to=3-3, shorten=0.2em]
            \arrow[from=2-2, to=1-2, shorten=0.2em]
         \end{tikzcd}
         \caption{M3}
         \label{M3}
     \end{subfigure}
        \caption{NMA-like Graphs Set 1}
        \label{NMA_LIKE_S1_GRAPHS}
\end{figure}

\begin{figure}[H]
     \centering
     \begin{subfigure}[b]{0.3\textwidth}
         \centering
         \begin{tikzcd}[row sep={4em,between origins}, column sep={4em,between origins}]
            & 4 \\
            & 2 \\
            1 & & 3
            \arrow[from=3-1, to=3-3, "3", shorten=0.2em]
            \arrow[from=3-3, to=2-2, "3", shorten=0.2em]
            \arrow[from=2-2, to=3-1, "3", shorten=0.2em]
            \arrow[from=3-1, to=1-2, shorten=0.2em]
            \arrow[from=3-3, to=1-2, shorten=0.2em]
            \arrow[from=2-2, to=1-2, shorten=0.2em]
         \end{tikzcd}
         \caption{NM1}
         \label{S21}
     \end{subfigure}
     \hfill
     \begin{subfigure}[b]{0.3\textwidth}
         \centering
         \begin{tikzcd}[row sep={4em,between origins}, column sep={4em,between origins}]
            & 4 \\
            & 2 \\
            1 & & 3 
            \arrow[from=3-1, to=3-3, "3", shorten=0.2em]
            \arrow[from=3-3, to=2-2, "3", shorten=0.2em]
            \arrow[from=2-2, to=3-1, "3", shorten=0.2em]
            \arrow[from=1-2, to=3-1, shorten=0.2em]
            \arrow[from=3-3, to=1-2, shorten=0.2em]
            \arrow[from=2-2, to=1-2, shorten=0.2em]
         \end{tikzcd}
         \caption{NM2}
         \label{S22}
     \end{subfigure}
     \hfill
     \begin{subfigure}[b]{0.3\textwidth}
         \centering
         \begin{tikzcd}[row sep={4em,between origins}, column sep={4em,between origins}]
            & 4 \\
            & 2 \\
            1 & & 3 
            \arrow[from=3-1, to=3-3, "3", shorten=0.2em]
            \arrow[from=3-3, to=2-2, "3", shorten=0.2em]
            \arrow[from=2-2, to=3-1, "3", shorten=0.2em]
            \arrow[from=1-2, to=3-1, shorten=0.2em]
            \arrow[from=1-2, to=3-3, shorten=0.2em]
            \arrow[from=2-2, to=1-2, shorten=0.2em]
         \end{tikzcd}
         \caption{NM3}
         \label{S23}
     \end{subfigure}
        \caption{NMA-like Graphs Set 2}
        \label{NMA_LIKE_S2_GRAPHS}
\end{figure}  

\subsubsection{\textit{High Edge Multiplicity Quivers}}\label{app:hem_quivers}\mbox{}\\
A selection of quivers with higher-multiplicity edges is shown below, all of which are correctly classified by the trained degree-18 SVM polynomial.

\begin{figure}[H]
     \centering
     \begin{subfigure}[b]{0.3\textwidth}
         \centering
         \begin{tikzcd}[row sep={4em,between origins}, column sep={4em,between origins}]
            1 & 2 & 3 & 4
            \arrow[from=1-1, to=1-2, "3", shorten=0.2em]
            \arrow[from=1-3, to=1-2, "4", shorten=0.2em]
            \arrow[from=1-3, to=1-4, "3", shorten=0.2em]
         \end{tikzcd}
         \caption{A4 Type \\ Multiplicity $\geq3$}
         \label{3A4}
     \end{subfigure}
     \hfill
     \begin{subfigure}[b]{0.3\textwidth}
         \centering
         \begin{tikzcd}[row sep={4em,between origins}, column sep={4em,between origins}]
            & 4 \\
            & 2 \\
            1 & & 3 
            \arrow[from=3-1, to=3-3, "96", shorten=0.2em]
            \arrow[from=2-2, to=3-3, "32"left, shorten=0.2em]
            \arrow[from= 2-2 , to=3-1, "381"below, shorten=0.2em]
            \arrow[from=1-2, to=3-1, "252"left, shorten=0.2em]
            \arrow[from=3-3, to=1-2, "3"right, shorten=0.2em]
            \arrow[from=1-2, to=2-2, "96", shorten=0.2em]
         \end{tikzcd}
         \caption{A4 Type Multiplicity $\geq3$,\\ \phantom{MMMMMM} Mutated}
         \label{3A4m}
     \end{subfigure}
     \hfill
    \begin{subfigure}[b]{0.3\textwidth}
         \centering
         \begin{tikzcd}[row sep={4em,between origins}, column sep={4em,between origins}]
            & 4 \\
            & 2 \\
            1 & & 3 
            \arrow[from=3-1, to=3-3, "9.4\times10^{15}", shorten=0.2em]
            \arrow[from=3-3, to=2-2, "8.3\times10^{5}"left, shorten=0.2em]
            \arrow[from=2-2, to=3-1, "1.1 \times10^{10}"below right, shorten=0.2em]
            \arrow[from=3-1, to=1-2, "8.1\times10^{15}"left, shorten=0.2em]
            \arrow[from=3-3, to=1-2, "2.1\times10^{15}"right, shorten=0.2em]
            \arrow[from=2-2, to= 1-2, "2.5\times10^{9}", shorten=0.2em]
         \end{tikzcd}
         \caption{A4 Type Multiplicity $\gg 3$ \\ \phantom{MMMMMM} Mutated}
         \label{3A4p}
    \end{subfigure}\\ \vspace{5mm}
    \begin{subfigure}[b]{0.3\textwidth}
         \centering
         \begin{tikzcd}[row sep={4em,between origins}, column sep={4em,between origins}]
            & 4 \\
            & 3 \\
            2 & & 1 
            \arrow[from=3-1, to=3-3, "3", shorten=0.2em]
            \arrow[from=3-3, to=2-2, "6 ", shorten=0.2em]
            \arrow[from=2-2, to=3-1, "3", shorten=0.2em]
            \arrow[from=2-2, to=1-2, shorten=0.2em]
         \end{tikzcd}
         \caption{NMA Type \\ Multiplicity $\geq3$}
         \label{3NM}
     \end{subfigure}
     \hfill
     \begin{subfigure}[b]{0.3\textwidth}
         \centering
         \begin{tikzcd}[row sep={4em,between origins}, column sep={4em,between origins}]
            & 4 \\
            & 3 \\
            1 & & 2 
            \arrow[from=3-1, to=3-3, "3", shorten=0.2em]
            \arrow[from=3-3, to=2-2, "3", shorten=0.2em]
            \arrow[from=2-2, to=3-1, "3", shorten=0.2em]
            \arrow[from=2-2, to=1-2, shorten=0.2em]
         \end{tikzcd}
         \caption{NMA Type Multiplicity $\geq3$,\\ \phantom{MMMMMM} Mutated}
         \label{3NMm}
     \end{subfigure}
     \hfill
    \begin{subfigure}[b]{0.3\textwidth}
         \centering
         \begin{tikzcd}[row sep={4em,between origins}, column sep={4em,between origins}]
            1 & 2 \\
            4 & 3
            \arrow[from=1-1, to=1-2, "61", shorten=0.2em]
            \arrow[from=1-2, to=2-2, "223", shorten=0.2em]
            \arrow[from=2-2, to=2-1, "61", shorten=0.2em]
            \arrow[from=2-1, to=1-1, "223", shorten=0.2em]
         \end{tikzcd}
         \caption{NMA Type \\ Multiplicity $\gg 3$}
         \label{3NMp}
    \end{subfigure}
    \caption{}
    \label{highmultiplicityquivers}    
\end{figure} 

\subsection{NN Architecture Diagrammatics}\label{app:nndiagrams}\mbox{}\\
Here are flowcharts displaying the NN architecture setups used in the learning of §\ref{sec:nn}.
Figure \ref{First_Experiment_Class_NN} is the architecture used for the experiments for §\ref{sec:exp1} and §\ref{sec:exp2}; whereas Figure \ref{FIRST_EXP_NN_DIAGRAM_BIG_DATA} is for §\ref{sec:exp3}.

\begin{figure}[H]
    \centering
    \includegraphics[width=0.55\textwidth]{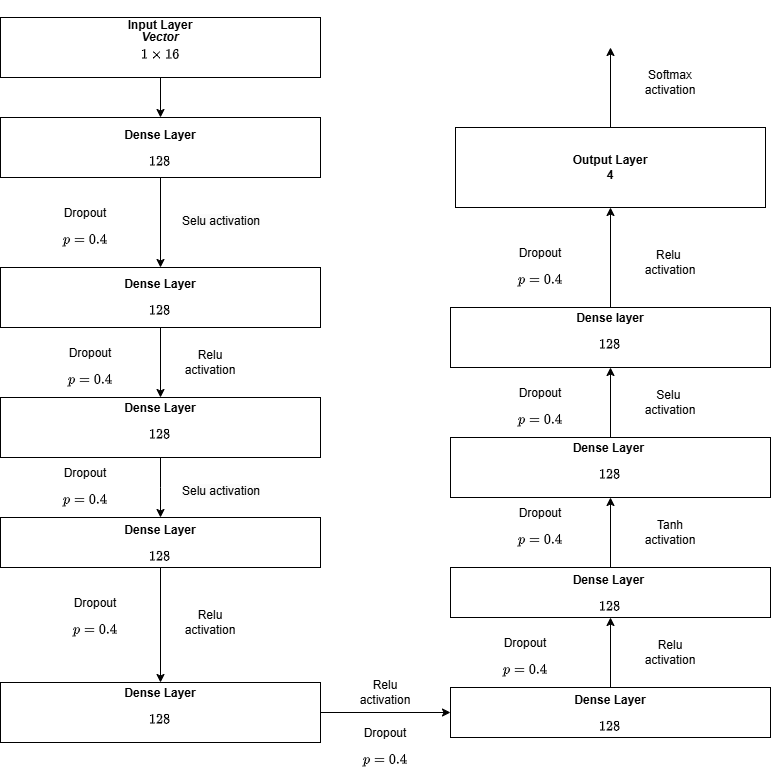}
    \caption{Diagram of the NN architecture used in experiments §\ref{sec:exp1} and §\ref{sec:exp2}.}
    \label{First_Experiment_Class_NN}
\end{figure}
\begin{figure}[H]
    \centering
    \includegraphics[width=0.25\textwidth]{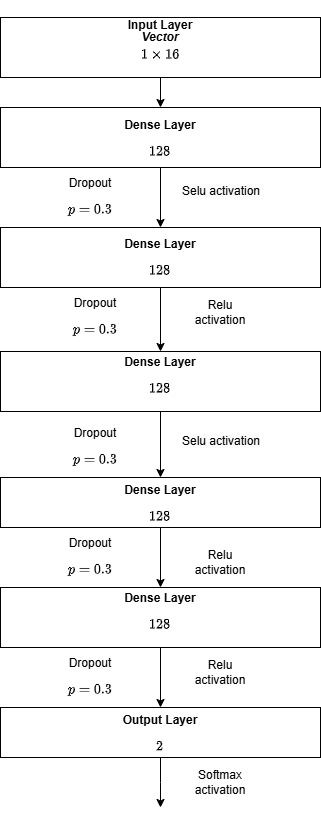}
    \caption{Diagram of the NN architecture used in experiment §\ref{sec:exp3}.}
    \label{FIRST_EXP_NN_DIAGRAM_BIG_DATA}
\end{figure}

\newpage                         
{\bf Declarations}

{\bf Conflicts of interests/Competing interests} {The authors have no conflicts of interests or competing interests
to declare.}

{\bf Data Availability} The datasets generated during the current study are shared at \href{https://github.com/KTKAW/MACHINE_LEARNING_MUTATION_ACYCLICITY_OF_QUIVERS.git}{\texttt{GitHub}}\footnote{\href{https://github.com/KTKAW/MACHINE_LEARNING_MUTATION_ACYCLICITY_OF_QUIVERS.git}{\texttt{https://github.com/KTKAW/MACHINE_LEARNING_MUTATION_ACYCLICITY_OF_QUIVERS.git}}}.

\bibliographystyle{plain}
\bibliography{references}

\end{document}